
\documentclass[12pt]{article}

\usepackage{amsmath, amssymb, latexsym, amsthm}
\usepackage{fullpage}
\usepackage{color}
\usepackage{tikz}
\usepackage{graphicx}
\usepackage{stmaryrd,amsbsy,enumerate}
\usepackage{hyperref}
\usetikzlibrary{calc}
\usepackage{cancel}
\usepackage{xspace}
\usepackage{url}
\usepackage{longtable}
\usepackage{rotating}
\usepackage{mathtools,amsmath}
\usepackage{array}
\usepackage{float}
\usepackage{adjustbox}
\usepackage{verbatimbox}
\usepackage{subcaption}

\usepackage{tikz}

\usepackage{soul}

\usepackage[normalem]{ulem}

\newcommand{\rbrac}[1]{\left( #1 \right)}
\newcommand{\sbrac}[1]{\left[ #1 \right]}

\newcommand{\E}{\mathbb{E}}
\newcommand{\ex}{\mathrm{ex}}
\newcommand{\eps}{\epsilon}
\renewcommand{\a}{\alpha}
\renewcommand{\b}{\beta}

%\usepackage{ifthen}

%-- -- -- line numbers -- -- --
%\if11     % 11 = on,  10 = off
%%a little hack, not perfect, but gives numbers also to equations etc.
%\usepackage[mathlines]{lineno}
%\newcommand*\patchAmsMathEnvironmentForLineno[1]{%
%  \expandafter\let\csname old#1\expandafter\endcsname\csname #1\endcsname
%  \expandafter\let\csname oldend#1\expandafter\endcsname\csname end#1\endcsname
%  \renewenvironment{#1}%
%     {\linenomath\csname old#1\endcsname}%
%     {\csname oldend#1\endcsname\endlinenomath}}% 
%\newcommand*\patchBothAmsMathEnvironmentsForLineno[1]{%
%  \patchAmsMathEnvironmentForLineno{#1}%
%  \patchAmsMathEnvironmentForLineno{#1*}}%
%\AtBeginDocument{%
%\patchBothAmsMathEnvironmentsForLineno{equation}%
%\patchBothAmsMathEnvironmentsForLineno{align}%
%\patchBothAmsMathEnvironmentsForLineno{flalign}%
%\patchBothAmsMathEnvironmentsForLineno{alignat}%
%\patchBothAmsMathEnvironmentsForLineno{gather}%
%\patchBothAmsMathEnvironmentsForLineno{multline}%
%}
%\linenumbers
%\fi
%
%
%\DeclareMathOperator{\im}{im}
%\DeclareMathOperator{\Hom}{Hom}

%-- -- -- line numbers -- -- --

\AtBeginDocument{%
   \def\MR#1{}
}

\newtheorem{theorem}{Theorem} %Defines \begin{theorem} to write "Theorem"
\newtheorem{theorem*}{Theorem} %Defines \begin{theorem} to write "Theorem"
 %square bracket tells it to count props with theorems

\newtheorem{lemma}[theorem]{Lemma}

\newtheorem*{thm_stability}{Theorem \ref{thm:stability}}

\theoremstyle{definition}

\theoremstyle{remark}
\newtheorem{remark}{Remark}

% Vertical Center - useful for figures
\newcommand{\vc}[1]{\ensuremath{\vcenter{\hbox{#1}}}}
\tikzset{unlabeled_vertex/.style={inner sep=1.7pt, outer sep=0pt, circle, fill}} 
\tikzset{labeled_vertex/.style={inner sep=2.2pt, outer sep=0pt, rectangle, fill=yellow, draw=black}} 
\tikzset{edge_color0/.style={color=black,line width=1.2pt}} 
\tikzset{edge_color1/.style={color=red,  line width=1.2pt,opacity=0}} 
\tikzset{edge_color2/.style={color=blue, line width=1.2pt,opacity=1}} 
\tikzset{edge_color3/.style={color=green,line width=1.2pt}} 
\tikzset{edge_color4/.style={color=red,  line width=1.2pt,dotted}} 
\tikzset{edge_color5/.style={color=blue, line width=1.2pt,dotted}} 
\tikzset{edge_color6/.style={color=green, line width=1.2pt,dotted}} 
\tikzset{edge_thin/.style={color=black}} 
\tikzset{edge_hidden/.style={color=black,dotted,opacity=0}} 
\tikzset{vertex_color1/.style={inner sep=1.7pt, outer sep=0pt, circle, fill=red, draw=black}} 
\tikzset{vertex_color2/.style={inner sep=1.7pt, outer sep=0pt, circle, fill=blue, draw=black}} 
\tikzset{vertex_color3/.style={inner sep=1.7pt, outer sep=0pt, circle, fill=green, draw=black}} 
\def\outercycle#1#2{ \draw \foreach \x in {0,1,...,#1}{(270-45+\x*360/#2:0.5) coordinate(x\x)};}

% OP's defs
\newcommand{\hide}[1]{}
\newcommand{\C}[1]{{\protect\mathcal{#1}}}
\renewcommand{\O}[1]{\overline{#1}}
\newcommand{\tind}{t_{\mathrm{ind}}}
\newcommand{\aut}{\operatorname{aut}}
\newcommand{\dd}{\mathrm{d}}
\newcommand{\I}[1]{{\mathbb #1}}
\newcommand{\V}[1]{{#1}}
\newcommand{\e}{\eps}
\newtheorem{claim}{Claim}[theorem]
\renewcommand{\mid}{:}
\renewcommand{\ge}{\geqslant}
\renewcommand{\geq}{\geqslant}
\renewcommand{\leq}{\leqslant}
\renewcommand{\le}{\leqslant}

\renewcommand{\succeq}{\succcurlyeq}

\begin{document}

\title{Minimizing the number of 5-cycles in graphs with given edge-density}
\author{
Patrick Bennett \thanks{Department of Mathematics, Western Michigan University, Kalamazoo, MI, USA. E-mail: {\tt patrick.bennett@wmich.edu}. Supported in part by Simons Foundation Grant \#426894.}
\and
Andrzej Dudek \thanks{Department of Mathematics, Western Michigan University, Kalamazoo, MI, USA. E-mail: {\tt andrzej.dudek@wmich.edu}. Supported in part by Simons Foundation Grant \#522400.}
\and
Bernard Lidick\'{y} \thanks{Department of Mathematics, Iowa State University. Ames, IA, USA. E-mail: {\tt lidicky@iastate.edu}. Supported in part by NSF grant DMS-1600390.}
\and
Oleg Pikhurko  \thanks{Mathematics Institute and DIMAP,
University of Warwick, Coventry CV4 7AL, UK. E-mail: {\tt o.pikhurko@warwick.ac.uk}. Supported in part by ERC
grant~306493.}
}

\maketitle

\begin{abstract}
Motivated by the work of Razborov about the minimal density of triangles in graphs we study the minimal density of the 5-cycle $C_5$. We show that every graph of order $n$ and size $\left( 1-\frac{1}{k}\right)\binom{n}{2}$, where $k\ge 3$ is an integer, contains at least
\[
\left( \frac{1}{10} -\frac{1}{2k} + \frac{1}{k^2} - \frac{1}{k^3} + \frac{2}{5 k^4} \right)n^5 +o(n^5)
\]
copies of $C_5$. This bound is optimal, since a matching upper bound is given by the balanced complete $k$-partite graph. The proof is based on the flag algebras framework. We also provide a stability result. An SDP solver is not necessary to verify our proofs.
\end{abstract}

\section{Introduction}
It is believed that \emph{extremal graph theory} was started by Tur\'an~\cite{Tur1941} when he proved that any graph on $n$ vertices with more than $\frac{r-2}{2(r-1)}n^2$ edges must contain a copy of $K_r$ (i.e.\ a clique with $r$ vertices). The case $r=3$ was earlier proved by Mantel~\cite{Man1907}.
The general \emph{Tur\'an problem} is to determine the minimum number $\ex(n, H)$ of edges in an $n$ vertex graph that guarantees a copy of a graph $H$, and has been very widely studied. The Erd\H{o}s--Stone Theorem~\cite{ErdSto1946} was a major breakthrough which asymptotically determined the value of $\ex(n, H)$ for all nonbipartite $H$. For such $H$ we have 
\[
\ex(n, H) = \frac{\chi(H)-2}{2(\chi(H)-1)}n^2 + o(n^2).
\]

The natural quantitative question that arises is how many copies of $H$ must be contained in a graph $G$ on $n$ vertices with $m > \ex(n, H)$ edges. This question has also been well studied. Obviously the number of edges $m$ can be expressed as a density parameter $p$ such that $m=p\binom{n}{2}$. Therefore, we will use the following notation. Let $G$ be a (large) graph of order $n$ and  $H$ a small one. Define $\nu_H(G)$ to be the number of unlabeled copies (not necessary induced) of $H$ in~$G$
and the corresponding density as
\[
d_{H}(G) = \frac{\nu_H(G)}{|V(G)|^{|V(H)|}}.
\] 
Furthermore, for a given number $p\in[0,1]$ let
\[
d_H(p) = \lim_{n\to\infty} \min_{G} d_{H}(G),
\] 
where the minimum is taken over all graphs $G$ of order $n$ and size $(p+o(1))\binom{n}{2}$. It is not hard to show by double-counting that the limit exists, see e.g.~\cite[Lemma~2.2]{PikhurkoSliacanTyros19}.

When $H=K_3$ (that means it is a triangle) Moon and Moser~\cite{MooMos1962} and also independently Nordhaus and Stewart \cite{NorSte1963}
determined $d_{K_3}(p)$ for any $p=1-\frac{1}{k}$, where $k$ is a positive integer. We call such $p=1-\frac{1}{k}$ a \emph{Tur\'an density}. Some other partial results for the general $r$-clique $H=K_r$ were established by Lov\'asz and Simonovits~\cite{LovSim1983}. However, for arbitrary $p$ these problems remained open for over 50 years. 

Then Razborov in his seminal paper~\cite{Raz2007} introduced the so-called \emph{flag algebras} and, using them, determined $d_{K_3}(p)$ for any $p$ in~\cite{Raz2008}. Subsequently, Pikhurko and Razborov \cite{PikRaz2017} characterized all almost extremal graphs. Very recently, Liu, Pikhurko and Staden~\cite{LPR2017} found the precise minimum number of triangles among graphs with a given number of edges in almost all range. Nikiforov \cite{Nik2011} determined $d_{K_4}(p)$ for all $p$, and then Reiher~\cite{Rei2016} determined $d_{K_r}(p)$ for all $r$ and~$p$.

In this paper we address the minimum density of the 5-cycle, $C_5$, in a graph with given edge density. We chose to investigate $C_5$ instead of $C_4$ since it is known due to Sidorenko~\cite{Sid1991} that for any fixed constant edge density $p$, the minimum $C_4$-density is achieved asymptotically by the random graph $G_{n, p}$. It is worth mentioning some other research related to 5-cycles.  Specifically, Grzesik~\cite{Grz2012} and independently Hatami, Hladk\'y, Kr{\'al'}, Norine and Razborov~\cite{HatHlaKraNorRaz2013} proved that the maximum density of 5-cycles in a triangle-free graph that is large or its number of vertices is a power of 5 is achieved by the balanced blow-up of a 5-cycle. 
The extension to graphs of all sizes, with one exception on 8 vertices, was done by Lidick\'y and Pfender~\cite{LP2017}.
This settled in the affirmative a conjecture of Erd\H{o}s~\cite{Erd1984}. On the other hand, Balogh, Hu, Lidick\'y, and Pfender~\cite{BalHuLidPfe2016} studied the problem of maximizing induced 5-cycles, and proved that this is achieved by the balanced iterated blow-up of a 5-cycle. This confirmed a special case of a conjecture of Pippinger and Golumbic~\cite{PipGol1975}.

The main result of this paper is as follows.

\begin{theorem}\label{thm:main}
Let $k\ge 3$ be an integer. Define 
 \begin{equation}\label{eq:p}
 p=1-\frac{1}{k}\quad\mbox{and}\quad\lambda=\frac{1}{10} -\frac{1}{2k} + \frac{1}{k^2} - \frac{1}{k^3} + \frac{2}{5 k^4}.
 \end{equation}
 Then 
 $$d_{C_5}(p)=\lambda.$$
%\[
%d_{C_5}(p) =  \frac{1}{10} -\frac{1}{2k} + \frac{1}{k^2} - \frac{1}{k^3} + \frac{2}{5 k^4}.
%\]
\end{theorem}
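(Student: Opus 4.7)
The plan is to establish the matching upper and lower bounds on $d_{C_5}(p)$ separately.

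\textbf{Upper bound.} Take $G = T(n,k)$ to be the balanced complete $k$-partite graph (with $k \mid n$); its edge density is $p + o(1)$. A sequence $(v_1,\ldots,v_5)$ of distinct vertices forming a $C_5$ (in cyclic order) projects to a closed walk of length $5$ on the ``colour'' graph $K_k$, and, to leading order, the count of such sequences factors as $(n/k)^5$ times the number of closed $5$-walks on $K_k$. Since the adjacency matrix $J-I$ of $K_k$ has eigenvalues $k-1$ (simple) and $-1$ (with multiplicity $k-1$), we get $\mathrm{tr}((J-I)^5) = (k-1)^5 - (k-1)$. Dividing by $10 = |\mathrm{Aut}(C_5)|$ and by $n^5$, a routine simplification of $(k-1)^5 - (k-1) = 10\lambda k^5$ gives $d_{C_5}(T(n,k)) \to \lambda$, establishing $d_{C_5}(p) \leq \lambda$.

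\textbf{Lower bound via flag algebras.} Let $\rho$ be the limit homomorphism of a convergent extremal sequence of graphs with edge density $p$; one needs $\rho(C_5) \geq \lambda$. My plan is to produce an algebraic identity in Razborov's flag algebra $\mathcal{A}^0$ of the form
\[
d_{C_5} - \lambda \;=\; \alpha\,(d_{K_2}-p) \;+\; \sum_{\sigma} \bigl\llbracket \mathbf{f}_\sigma^{\top} M_\sigma \mathbf{f}_\sigma\bigr\rrbracket_\sigma \;+\; o(1),
\]
where $\sigma$ ranges over a small collection of types, $\mathbf{f}_\sigma$ is a column of $\sigma$-flags, each $M_\sigma \succeq 0$, and $\llbracket\cdot\rrbracket_\sigma$ is the unlabelling operator. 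The equality constraint $d_{K_2}(G) = p + o(1)$ annihilates the Lagrange term, and the sum-of-squares contribution yields $\rho(C_5) - \lambda \geq 0$. The paper's claim that an SDP solver is unnecessary suggests the certificate is compact: I would try types on at most $3$ labelled vertices, a few flags per type, and coefficients expressible as simple functions of $k$.

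\textbf{Main obstacle.} The chief difficulty is that $\lambda$ depends on the parameter $k$, so the certificate itself must track $k$ in a form that no numerical SDP solver will output. My strategy would be as follows: for each candidate type $\sigma$, first identify those $\sigma$-flag combinations that \emph{vanish} on the limit of $T(n,k)$ conditioned on the $\sigma$-embedding; any choice of $M_\sigma$ whose column span lies in the orthogonal complement of these vectors is automatically tight at $T(n,k)$, which is essential since the certificate must be exact at the extremal configuration. Second, treat the remaining entries of $M_\sigma$ and the scalar $\alpha$ as unknowns, and match coefficients over every graph on at most $5$ vertices --- a finite linear system whose solution will be polynomial in $1/k$. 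The final hurdle is verifying $M_\sigma \succeq 0$ for \emph{every} integer $k \geq 3$; I expect the matrices to decompose as low-rank sums with explicit eigenvectors, so positivity can be checked by exhibiting a hand-verifiable Cholesky-style factorisation parametrised by $k$. The same identity, together with an analysis of its zero locus, would also yield the stability statement mentioned in the abstract.
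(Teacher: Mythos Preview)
Your proposal is correct and matches the paper's approach almost exactly. The upper bound is computed in the paper by grouping $C_5$ copies in $T_k^n$ by how many parts they hit, but your eigenvalue computation $\mathrm{tr}((J-I)^5)=(k-1)^5-(k-1)$ is an equivalent (and arguably cleaner) route to the same number. For the lower bound, the paper does precisely what you outline: a single type $\sigma$ (one labelled vertex), the six $3$-vertex $\sigma$-flags, a Lagrange multiplier $\alpha=\alpha(k)$ for the edge-density constraint, and a $6\times 6$ matrix $M=\frac{3}{2k^4}B^{\top}AB$ where $B$ is $3\times 6$ with entries linear in $k$ (its kernel is exactly the span of the flag vectors evaluated at the Tur\'an limit, as you anticipated) and $A$ is an explicit $3\times 3$ matrix, positive definite for all $k\ge 3$ by a principal-minors check---so your ``Cholesky-style factorisation parametrised by $k$'' is realised verbatim. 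The only refinement worth noting is that a single labelled vertex and $\ell=5$ already suffice; you will not need types on two or three labelled vertices.
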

\noindent

We also have the following stability result. Let the \emph{Tur\'an graph} $T_k^n$ be the complete $k$-partite graph on $n$ vertices with part sizes as equal as possible.

\begin{theorem}\label{th:stab} For every integer $k\ge 3$ and real $\delta>0$ there is $\eps>0$ such that every graph $G$ with $n\ge 1/\eps$ vertices, at least $(p-\eps){n\choose 2}$ edges  and at most $(\lambda +\eps)n^5$
	copies of $C_5$ is within edit distance $\delta n^2$ from the Tur\'an graph $T_k^n$, where $p$ and $\lambda$ are as in~\eqref{eq:p}. 
\end{theorem}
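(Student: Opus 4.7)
The plan is a standard compactness argument combined with a uniqueness analysis of the extremal graphon, reducing Theorem~\ref{th:stab} to Theorem~\ref{thm:main} plus a structural characterization of all its minimizers. Suppose for contradiction that for some $\delta>0$ there exist $\eps_n\to 0$ and graphs $G_n$ on $n\to\infty$ vertices satisfying the edge- and $C_5$-density hypotheses, yet every relabeling of $V(G_n)$ is at edit distance more than $\delta n^2$ from $T_k^n$. By compactness of the graphon space in the cut metric, pass to a subsequence with $G_n \to W$ for some graphon $W$. The edge density of $W$ is at least $p$, while the continuity of subgraph densities forces $d_{C_5}(W)\le \lambda$; combined with Theorem~\ref{thm:main} this gives $d_{C_5}(W)=\lambda$, so $W$ is a minimizer.

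The crucial step is to show that every such minimizer is weakly isomorphic to the $k$-partite Tur\'an graphon $W_{T_k}$ (the $\{0,1\}$-valued graphon corresponding to the blow-up of $K_k$ with equal parts of measure $1/k$). This is to be extracted from the flag-algebra proof of Theorem~\ref{thm:main}: that proof produces a certificate expressing $d_{C_5}-\lambda \cdot \mathbf{1}$ as a sum of a non-negative combination of squares of flags plus a non-negative combination of densities of certain ``forbidden'' configurations, evaluated against the edge-density constraint. Standard complementary slackness then forces, in any extremal graphon, each forbidden configuration to have density zero and each flag appearing in a positive-weight square to take a prescribed constant conditional value almost everywhere. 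One then argues combinatorially that a graphon in which all these vanishing conditions hold must be $k$-partite with balanced parts of measure $1/k$; in particular the edge-density constraint is tight.

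Once uniqueness of the minimizer is established, the stability conclusion is immediate. Since $G_n \to W_{T_k}$ and $T_k^n \to W_{T_k}$ in the cut metric, after applying the optimal bijection between $V(G_n)$ and $V(T_k^n)$ the two graphs are within cut distance $o(1)$, hence at edit distance $o(n^2)$. For $n$ sufficiently large this is less than $\delta n^2$, contradicting the choice of $G_n$ and completing the proof.

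The main obstacle is the uniqueness step. Reading off a clean list of ``tight'' configurations from the flag-algebra certificate is typically delicate, and then arguing that a graphon avoiding those configurations (subject to the edge-density constraint) must be the balanced $k$-partite one requires a careful combinatorial analysis. The argument must handle the full range $k\ge 3$ uniformly, and in particular must rule out near-extremal mixtures of Tur\'an-like structures on different numbers of parts; this is where the explicit algebraic form of $\lambda$ as a function of $k$ in~\eqref{eq:p}, together with the strict convexity of that function, is expected to play a central role.
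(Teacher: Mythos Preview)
Your high-level strategy matches the paper's second proof: compactness to a limiting graphon, uniqueness of the minimizer, then transfer from cut to edit distance. However, the uniqueness step is where all the content lies, and your speculation about its mechanism is off target.

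The paper carries out uniqueness in two stages. First (Lemma~\ref{lm:NoCocherries}), it shows that the co-cherry $\overline{P_2}$ has density $o(1)$ in any sequence satisfying~\eqref{eq:Gm}. This does \emph{not} follow directly from complementary slackness: the sixteen $5$-vertex graphs $F$ whose coefficient $c_F$ strictly exceeds the minimum do not cover every $5$-vertex graph containing $\overline{P_2}$, so the naive ``forbidden configurations vanish'' argument does not yield multipartiteness. Instead the paper applies the induced removal lemma to destroy all sixteen slack graphs, and then performs a short case analysis on one-vertex extensions of the $4$-vertex graphs with $0$, $1$, or $2$ independent edges to force the cleaned graph to be complete multipartite up to $O(n^{3/2})$ edits.

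Second (Lemma~\ref{lm:Unique}), having reduced to complete-multipartite graphons, the paper exploits the explicit kernel of $M=\frac{3}{2k^4}B^{T}AB$. Tightness forces the rooted-density vector $Y_j\in\mathbb{R}^6$ to lie in $\ker M=\ker B$ for every part $j$ of positive measure; combined with the vanishing of the two co-cherry coordinates this pins $Y_j$ to a single line, and reading off the first coordinate gives $\rho_j=1/k$ and $\rho_0=0$. No convexity of $\lambda$ as a function of $k$ enters; the argument is purely linear-algebraic once the multipartite reduction is in hand.

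Finally, your last step---cut convergence to $W_{T_k}$ implies edit distance $o(n^2)$---is true here but only because $W_{T_k}$ is $\{0,1\}$-valued; this is a nontrivial input (see \cite[Lemma~2.9]{LovaszSzegedy10ijm} or \cite[Theorem~17]{Pikhurko10dm}) and you should state it explicitly, since cut closeness does not imply edit closeness for general graphons.
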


Observe that the above theorems (as stated) also hold in the case $k=2$ for which $d_{C_5}(\frac{1}{2})=0$. However, their validity in this case easily follows from known standard results. 
Although the proofs of Theorems~\ref{thm:main} and~\ref{th:stab} are based on the flag algebras framework, their verification does not require using any SDP solver.

Theorems~\ref{thm:main} and~\ref{th:stab} are proved in respectively Sections~\ref{sec:main_thm} and~\ref{sec:stability}.
Finally, in Section~\ref{sec:general_case}, we discuss the general edge density and provide an upper bound on $d_{C_5}(p)$ for any $p\in [0,1]$.

\section{Proof of the main theorem}\label{sec:main_thm}

\subsection{Upper bound}

By considering the sequence of graphs $T_k^n$ as $n\to\infty$, we get
\[
d_{C_5}(T_k^n) = \frac{\left[\frac{1}{10} (k)_5 + \frac12 (k)_4 + \frac 12 (k)_3 \right] \left(\frac nk\right)^5}{n^5} + o(1),
\]
where $(k)_{\ell} = k(k-1)\cdots (k-\ell+1)$ is the \emph{falling factorial}.
To justify the numerator, we count the number of $C_5$ copies with vertices in parts $V_1, V_2, V_3, V_4, V_5$ of the partition. These parts may not all be distinct: for example we may have $V_1 = V_3$. However $T_k^n$ has no edges within these parts and so we know $V_i \neq V_{i+1}$. We count copies of $C_5$ by grouping them according to how many distinct parts there are among $V_1, \ldots, V_5$. Now there are asymptotically $\frac{1}{10} (k)_5 \left(\frac nk\right)^5$ copies that hit 5 different parts (label 5 distinct parts, choose one vertex in each part, and divide by 10 for overcounting). Also, there are  asymptotically $\frac12 (k)_4\left(\frac nk\right)^5$ copies hitting 4 parts, and $\frac12 (k)_3\left(\frac nk\right)^5$ copies hitting 3 parts. 

Simplifying, we get that $d_{C_5}(T_k^n) =\lambda+o(1)$,
%\[
%d_{C_5}(T_k^n) = \frac{1}{10} -\frac{1}{2k} + \frac{1}{k^2} - \frac{1}{k^3} + \frac{2}{5 k^4} + o(1),
%\]
which implies the upper bound in Theorem~\ref{thm:main}.

\subsection{Lower bound}\label{sec:lower}

\subsubsection{Preliminaries}

The proof of the lower bound in Theorem~\ref{thm:main} relies on the celebrated flag algebra method introduced by Razborov~\cite{Raz2007}. Here we briefly discuss the main idea behind this approach, referring the reader to~\cite{Raz2007} for all details. Alternatively, our lower bound is rephrased at the beginning of Section~\ref{sec:stability} by means of a combinatorial identity (namely~\eqref{eq:main}) whose statement does not use any flag algebra formalism.

Let $(G_n)_{n\in\mathbb{N}}$ be a sequence of graphs, such that order of $G_n$ increases.
Such a sequence is called \emph{convergent} if for every fixed graph $H$, the density of $H$ in $G_n$ converges, i.e., for every  $H$ there exists some number $\phi(H)$, such that
\[
\lim_{n \to \infty} p(H,G_n) = \phi(H),
\]
where $p(H,G)$ is the probability that $|H|=|V(H)|$ vertices chosen uniformly at random from $V(G)$ induce a copy of $H$. (Here, it will be more convenient to count induced copies of $H$; see e.g.\ Equations~(5.19)--(5.21) in~\cite{Lovasz:lngl}
that show how to switch between induced and non-induced versions.)
Notice that any sequence of graphs whose orders increase has a convergent subsequence. 
Thus, without loss of generality we assume $G_n$ is convergent.
Note that $\phi$ cannot be an arbitrary function since it must satisfy many obvious identities such as $\phi(edge) + \phi(nonedge)=1$.

Interestingly, these $\phi$ exactly correspond to homomorphisms that we now describe.
Denote by $\mathcal{F}$ the set of all graphs and by $\mathcal{F}_\ell$ the set of graphs of order $\ell$, up to an isomorphism.
Let $\mathbb{R}\mathcal{F}$ be the set of all finite formal linear combinations of graphs in $\mathcal{F}$ with real coefficients. It comes with the natural operations of addition and multiplication by a real number.
Let $\mathcal{K}$ be a linear subspace generated by all linear combinations
\begin{align}\label{eq:zero}
F - \sum_{H \in \mathcal{F}_\ell} p(F,H) \cdot H, 
\end{align}
where $\ell > |F|$. Notice that $\phi$ evaluated at any element of $\mathcal{K}$ gives 0.
Finally, let $\mathcal{A}$ be $\mathbb{R}\mathcal{F}$ factorized by $\mathcal{K}$.
It is possible to define multiplication on $\mathcal{A}$, which we do in Section~\ref{subsection:cFM}.
It can be proved that $\mathcal{A}$ is indeed an algebra. 
Now limits of convergent graph sequences correspond to homomorphism $\phi$ from $\mathcal{A}$ to $\mathbb{R}$ such that $\phi(F) \geq 0$ for all $F \in \mathcal{F}$.
Denote the set of all such homomorphisms by $Hom^+(\mathcal{A},\mathbb{R})$.

%If $G$ is a graph, then we want to count the number of 5-cycles in $G$.
Let $OPT$ be the following linear combination, which counts the $C_5$ copies using induced subgraphs:
\[
OPT = 
\vc{\begin{tikzpicture}\outercycle{6}{5}
\draw[edge_color2] (x0)--(x1);\draw[edge_color1] (x0)--(x2);\draw[edge_color1] (x0)--(x3);\draw[edge_color2] (x0)--(x4);  \draw[edge_color2] (x1)--(x2);\draw[edge_color1] (x1)--(x3);\draw[edge_color1] (x1)--(x4);  \draw[edge_color2] (x2)--(x3);\draw[edge_color1] (x2)--(x4);  \draw[edge_color2] (x3)--(x4);    
\draw (x0) node[unlabeled_vertex]{};\draw (x1) node[unlabeled_vertex]{};\draw (x2) node[unlabeled_vertex]{};\draw (x3) node[unlabeled_vertex]{};\draw (x4) node[unlabeled_vertex]{};
\end{tikzpicture}}
+ \vc{\begin{tikzpicture}\outercycle{6}{5}
\draw[edge_color2] (x0)--(x1);\draw[edge_color1] (x0)--(x2);\draw[edge_color1] (x0)--(x3);\draw[edge_color2] (x0)--(x4);  \draw[edge_color2] (x1)--(x2);\draw[edge_color1] (x1)--(x3);\draw[edge_color1] (x1)--(x4);  \draw[edge_color2] (x2)--(x3);\draw[edge_color2] (x2)--(x4);  \draw[edge_color2] (x3)--(x4);    
\draw (x0) node[unlabeled_vertex]{};\draw (x1) node[unlabeled_vertex]{};\draw (x2) node[unlabeled_vertex]{};\draw (x3) node[unlabeled_vertex]{};\draw (x4) node[unlabeled_vertex]{};
\end{tikzpicture}}
+ \vc{\begin{tikzpicture}\outercycle{6}{5}
\draw[edge_color2] (x0)--(x1);\draw[edge_color1] (x0)--(x2);\draw[edge_color1] (x0)--(x3);\draw[edge_color2] (x0)--(x4);  \draw[edge_color2] (x1)--(x2);\draw[edge_color1] (x1)--(x3);\draw[edge_color2] (x1)--(x4);  \draw[edge_color2] (x2)--(x3);\draw[edge_color2] (x2)--(x4);  \draw[edge_color2] (x3)--(x4);    
\draw (x0) node[unlabeled_vertex]{};\draw (x1) node[unlabeled_vertex]{};\draw (x2) node[unlabeled_vertex]{};\draw (x3) node[unlabeled_vertex]{};\draw (x4) node[unlabeled_vertex]{};
\end{tikzpicture}}
+ 2 \vc{\begin{tikzpicture}\outercycle{6}{5}
\draw[edge_color2] (x0)--(x1);\draw[edge_color1] (x0)--(x2);\draw[edge_color1] (x0)--(x3);\draw[edge_color2] (x0)--(x4);  \draw[edge_color2] (x1)--(x2);\draw[edge_color2] (x1)--(x3);\draw[edge_color1] (x1)--(x4);  \draw[edge_color2] (x2)--(x3);\draw[edge_color2] (x2)--(x4);  \draw[edge_color2] (x3)--(x4);    
\draw (x0) node[unlabeled_vertex]{};\draw (x1) node[unlabeled_vertex]{};\draw (x2) node[unlabeled_vertex]{};\draw (x3) node[unlabeled_vertex]{};\draw (x4) node[unlabeled_vertex]{};
\end{tikzpicture}} 
+ 2 \vc{\begin{tikzpicture}\outercycle{6}{5}
\draw[edge_color2] (x0)--(x1);\draw[edge_color1] (x0)--(x2);\draw[edge_color1] (x0)--(x3);\draw[edge_color2] (x0)--(x4);  \draw[edge_color2] (x1)--(x2);\draw[edge_color2] (x1)--(x3);\draw[edge_color2] (x1)--(x4);  \draw[edge_color2] (x2)--(x3);\draw[edge_color2] (x2)--(x4);  \draw[edge_color2] (x3)--(x4);    
\draw (x0) node[unlabeled_vertex]{};\draw (x1) node[unlabeled_vertex]{};\draw (x2) node[unlabeled_vertex]{};\draw (x3) node[unlabeled_vertex]{};\draw (x4) node[unlabeled_vertex]{};
\end{tikzpicture}}
+ 4 \vc{\begin{tikzpicture}\outercycle{6}{5}
\draw[edge_color2] (x0)--(x1);\draw[edge_color1] (x0)--(x2);\draw[edge_color2] (x0)--(x3);\draw[edge_color2] (x0)--(x4);  \draw[edge_color2] (x1)--(x2);\draw[edge_color1] (x1)--(x3);\draw[edge_color2] (x1)--(x4);  \draw[edge_color2] (x2)--(x3);\draw[edge_color2] (x2)--(x4);  \draw[edge_color2] (x3)--(x4);    
\draw (x0) node[unlabeled_vertex]{};\draw (x1) node[unlabeled_vertex]{};\draw (x2) node[unlabeled_vertex]{};\draw (x3) node[unlabeled_vertex]{};\draw (x4) node[unlabeled_vertex]{};
\end{tikzpicture}} 
+ 6 \vc{\begin{tikzpicture}\outercycle{6}{5}
\draw[edge_color2] (x0)--(x1);\draw[edge_color1] (x0)--(x2);\draw[edge_color2] (x0)--(x3);\draw[edge_color2] (x0)--(x4);  \draw[edge_color2] (x1)--(x2);\draw[edge_color2] (x1)--(x3);\draw[edge_color2] (x1)--(x4);  \draw[edge_color2] (x2)--(x3);\draw[edge_color2] (x2)--(x4);  \draw[edge_color2] (x3)--(x4);    
\draw (x0) node[unlabeled_vertex]{};\draw (x1) node[unlabeled_vertex]{};\draw (x2) node[unlabeled_vertex]{};\draw (x3) node[unlabeled_vertex]{};\draw (x4) node[unlabeled_vertex]{};
\end{tikzpicture}} 
+ 12 \vc{\begin{tikzpicture}\outercycle{6}{5}
\draw[edge_color2] (x0)--(x1);\draw[edge_color2] (x0)--(x2);\draw[edge_color2] (x0)--(x3);\draw[edge_color2] (x0)--(x4);  \draw[edge_color2] (x1)--(x2);\draw[edge_color2] (x1)--(x3);\draw[edge_color2] (x1)--(x4);  \draw[edge_color2] (x2)--(x3);\draw[edge_color2] (x2)--(x4);  \draw[edge_color2] (x3)--(x4);    
\draw (x0) node[unlabeled_vertex]{};\draw (x1) node[unlabeled_vertex]{};\draw (x2) node[unlabeled_vertex]{};\draw (x3) node[unlabeled_vertex]{};\draw (x4) node[unlabeled_vertex]{};
\end{tikzpicture}},
\]
where the coefficient of each graph is the number of copies of $C_5$ it contains. Thus, 
\begin{equation}\label{eq:OPTAsC5}
\phi(OPT) = 120 \lim_{n \rightarrow \infty} d_{C_5}(G_n).
\end{equation}
The factor $120=5!$ comes from the fact that $p(F,G_n)$ for $F\in\C F_5$ is the number of copies of $F$ divided by ${n\choose 5}$ whereas our scaling for $d_{C_5}$ was chosen as~$n^{-5}$.
Notice that $OPT$ can be written as a linear combination of all 34 graphs on 5-vertices, where 26 graphs have coefficient 0. Namely,
\begin{align}\label{eq:FA1}
OPT = \sum_{F \in \mathcal{F}_5} c_F^{OPT} F,
\end{align}
where the nonzero entries $c_F^{OPT}$ are as above.

Our goal is to prove a good lower bound on
\[
\min_{\phi \in Hom^+(\mathcal{A},\mathbb{R})} \phi(OPT),
\]
 given that that the edge density is $p$, that is, we have
\begin{align}\label{eq:FA2}
\phi\left(
 \vc{\begin{tikzpicture}\outercycle{3}{2}
\draw[edge_color2] (x0)--(x1);
\draw (x0) node[unlabeled_vertex]{};\draw (x1) node[unlabeled_vertex]{};
\end{tikzpicture}}\right) = p.
\end{align}

 For this we find suitable $A \in \mathcal{A}$, such that  $\phi(A) \geq 0$ for all $\phi \in Hom^+(\mathcal{A},\mathbb{R})$ with $\phi(K_2)=p$, and use it in calculations. In particular, we will use it as $\phi(OPT) \geq \phi(OPT) - \phi(A) = \phi(OPT - A) \geq c$, where $c$ is the smallest coefficient $c_F$ when we express   $OPT-A$ as
 $\sum_{F\in\C F_\ell} c_FF$. Note that $A$ may contain both positive and negative coefficients, and these coefficients combine with coefficients in $OPT$.

When $p=1-\frac1k$ for integer $k\ge 3$, it is possible to prove the sharp lower bound as above by considering graphs of order 5 with only one labeled vertex. Similarly to defining the algebra $\mathcal{A}$ and limits of convergent graph sequences, one can define limits of sequences from the set $\mathcal F^1$ which consists of graphs with exactly one labeled vertex up a label-preserving isomorphism. 
This gives an algebra $\mathcal{A}^1$  and homomorphisms $Hom^+(\mathcal{A}^1,\mathbb{R})$.
In the following, we depict the labeled vertex by a square.

Let $X$ be the following column vector
\begin{equation}\label{eq:X}
X = (X_1,\dots,X_6)^T=\left( 
\vc{\begin{tikzpicture}\outercycle{4}{3}
\draw[edge_color1] (x0)--(x1);\draw[edge_color1] (x0)--(x2);  \draw[edge_color1] (x1)--(x2);    
\draw (x0) node[labeled_vertex]{};\draw (x1) node[unlabeled_vertex]{};\draw (x2) node[unlabeled_vertex]{};
\end{tikzpicture}}, 
\vc{\begin{tikzpicture}\outercycle{4}{3}
\draw[edge_color1] (x0)--(x1);\draw[edge_color1] (x0)--(x2);  \draw[edge_color2] (x1)--(x2);    
\draw (x0) node[labeled_vertex]{};\draw (x1) node[unlabeled_vertex]{};\draw (x2) node[unlabeled_vertex]{};
\end{tikzpicture}},
\vc{\begin{tikzpicture}\outercycle{4}{3}
\draw[edge_color1] (x0)--(x1);\draw[edge_color2] (x0)--(x2);  \draw[edge_color1] (x1)--(x2);    
\draw (x0) node[labeled_vertex]{};\draw (x1) node[unlabeled_vertex]{};\draw (x2) node[unlabeled_vertex]{};
\end{tikzpicture}},
\vc{\begin{tikzpicture}\outercycle{4}{3}
\draw[edge_color1] (x0)--(x1);\draw[edge_color2] (x0)--(x2);  \draw[edge_color2] (x1)--(x2);    
\draw (x0) node[labeled_vertex]{};\draw (x1) node[unlabeled_vertex]{};\draw (x2) node[unlabeled_vertex]{};
\end{tikzpicture}},
\vc{\begin{tikzpicture}\outercycle{4}{3}
\draw[edge_color2] (x0)--(x1);\draw[edge_color2] (x0)--(x2);  \draw[edge_color1] (x1)--(x2);    
\draw (x0) node[labeled_vertex]{};\draw (x1) node[unlabeled_vertex]{};\draw (x2) node[unlabeled_vertex]{};
\end{tikzpicture}},
\vc{\begin{tikzpicture}\outercycle{4}{3}
\draw[edge_color2] (x0)--(x1);\draw[edge_color2] (x0)--(x2);  \draw[edge_color2] (x1)--(x2);    
\draw (x0) node[labeled_vertex]{};\draw (x1) node[unlabeled_vertex]{};\draw (x2) node[unlabeled_vertex]{};
\end{tikzpicture}}
\right)^T.
\end{equation}
Notice that $X$ is the vector of all graphs on 3 vertices with exactly one labeled vertex (the yellow square).
For isomorphism, the labeled vertex must be preserved but the remaining two
vertices may be swapped.
% where we define a {\it 3-type} as a graph with one special vertex (the yellow square) two more vertices which are indistinguishable from each other. 
If  $M$ is a positive semidefinite matrix in $\mathbb{R}^{6 \times 6}$, then for
every $\phi^1 \in Hom^+(\mathcal{A}^1,\mathbb{R})$ it holds that
\[
\phi^1\left( X^T M X \right)=\phi^1\left(X^T\right)  M \phi^1\left(X\right)\ge 0,
\]
where by $\phi^1\left(X\right)$ we mean the vector that results from applying $\phi^1$ to each coordinate of $X$.

Also, there is a linear operator  $\left\llbracket\  \cdot\ \right\rrbracket_1:\mathbb{R}\mathcal F^1\to \mathbb{R}\mathcal F$ (which, roughly speaking, ``unlabels'' each $F\in\mathcal F^1$) such that for all $\phi \in Hom^+(\mathcal{A},\mathbb{R})$ we have $\phi\left(  \llbracket X^T M X \rrbracket_1  \right)\ge 0$. Furthermore,  we have
%$\llbracket X^T M X \rrbracket_1$ is in $\mathbb{R}\mathcal F_5$, that is,
\begin{align}\label{eq:FA3}
%0 &\leq \phi\left(  \llbracket X^T M X \rrbracket_1  \right)
%& 
\llbracket X^T M X \rrbracket_1 &= \sum_{F \in \mathcal{F}_5} c_F^M \cdot F,
\end{align}
 see Section~\ref{subsection:cFM} for more details, in particular on how to calculate coefficients $c_F^M$.

Also, the relation \eqref{eq:zero} for cliques $K_2$ and $K_1$ gives that respectively
$K_2  = \sum_{H \in \mathcal{F}_5} p(K_2,H) \cdot H$ and
$1=K_1=\sum_{H \in \mathcal{F}_5} H$. Thus
\eqref{eq:FA2} can be written as an identity involving densities of $5$-vertex graphs.

 Next, we take the sum of equations \eqref{eq:FA1}, \eqref{eq:FA2} multiplied by some $\alpha$, and $\phi\left(  \llbracket X^T M X \rrbracket_1  \right)\ge 0$ expanded using \eqref{eq:FA3}, and obtain 
\begin{align*}
\phi(OPT) &\geq
\phi(OPT) + 
\alpha\left(
p-\phi\left(
 \vc{\begin{tikzpicture}\outercycle{3}{2}
\draw[edge_color2] (x0)--(x1);
\draw (x0) node[unlabeled_vertex]{};\draw (x1) node[unlabeled_vertex]{};
\end{tikzpicture}}\right)
\right)
-
\phi\left(  \llbracket X^T M X \rrbracket_1  \right)\\
&=  \phi\left(  OPT  +\alpha p -\alpha  \vc{\begin{tikzpicture}\outercycle{3}{2}
\draw[edge_color2] (x0)--(x1);
\draw (x0) node[unlabeled_vertex]{};\draw (x1) node[unlabeled_vertex]{};
\end{tikzpicture}}
-
\llbracket X^T M X \rrbracket_1
\right)\\
&= \phi\left(  \sum_{F \in \mathcal{F}_5} \left(c_F^{OPT} + \alpha p -\alpha \cdot p(K_2,F) - c_F^M\right) \cdot  F   \right)
\end{align*}
(In Appendix~\ref{appendix:A} we provide $c_F^{OPT}$ and $p(K_2,F)$ for each $F\in \mathcal{F}_5$.) For $F\in\mathcal F_5$, define
%For simplicity, we use for every $F$
\begin{equation}\label{eq:cF}
c_F =  c_F^{OPT} + \alpha p -\alpha \cdot p(K_2,F) - c_F^M.
\end{equation}
With this notation
\begin{align}
\phi(OPT) \geq \phi\left( \sum_{F \in \mathcal{F}_5}  c_F \cdot F      \right) 
\geq   \min_{F \in \mathcal{F}_5}  c_F \cdot \phi\left(   \sum_{F \in \mathcal{F}_5}  F  \right) 
 = \min_{F \in \mathcal{F}_5}  c_F,\label{eq:FA}
\end{align}
where $c_F$ is a number that depends on the choice of  $M$ and $\alpha$. Let us transfer this back to our extremal graph problem:

\begin{lemma}\label{lm:lower} For every $p\in[0,1]$, $M\succeq 0$ and $\alpha\in\I R$, with $c_F=c_F(p,M,\alpha)$ as in~\eqref{eq:cF}, we have 
	$$
	d_{C_5}(p)\ge \frac1{120} \min_{F \in \mathcal{F}_5}  c_F.
	$$
	\end{lemma}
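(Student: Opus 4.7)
The plan is to unwind the flag algebra setup just developed and apply it to a suitable convergent limit of extremal sequences. Concretely, pick for each large $n$ a graph $G_n$ on $n$ vertices with edge density $p+o(1)$ and $d_{C_5}(G_n) = d_{C_5}(p) + o(1)$; the existence of such a sequence is exactly the definition of $d_{C_5}(p)$. Since the space of possible density profiles is sequentially compact, I would pass to a convergent subsequence, obtaining in the limit a homomorphism $\phi\in Hom^+(\mathcal{A},\mathbb{R})$ with
\[
\phi\left(\vc{\begin{tikzpicture}\outercycle{3}{2}\draw[edge_color2] (x0)--(x1);\draw (x0) node[unlabeled_vertex]{};\draw (x1) node[unlabeled_vertex]{};\end{tikzpicture}}\right)=p,
\]
so that \eqref{eq:FA2} holds, and, by \eqref{eq:OPTAsC5}, $\phi(OPT)=120\,d_{C_5}(p)$.

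Next I would verify the two nonnegativity inputs that feed into the chain \eqref{eq:FA}. First, since $M\succeq 0$ and $\phi\circ\llbracket\,\cdot\,\rrbracket_1$ respects the quadratic form $X^TMX$ in the sense that $\phi(\llbracket X^T M X\rrbracket_1)\geq 0$ for every $\phi\in Hom^+(\mathcal{A},\mathbb{R})$ (this is the standard flag algebra positivity that was recalled above). Second, by the definition of $\phi$ and of $c_F^{OPT}$, identity \eqref{eq:FA1} yields
\[
\phi(OPT)=\sum_{F\in\mathcal{F}_5} c_F^{OPT}\,\phi(F),
\]
while the relation \eqref{eq:zero} applied to $K_2$ and $K_1$ gives the two expansions $\phi(K_2)=\sum_{F\in\mathcal{F}_5} p(K_2,F)\,\phi(F)$ and $1=\sum_{F\in\mathcal{F}_5}\phi(F)$.

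With these ingredients in place, I would simply follow the manipulation already written out before the statement. Adding $\alpha(p-\phi(K_2))=0$ and subtracting the nonnegative quantity $\phi(\llbracket X^TMX\rrbracket_1)$ from $\phi(OPT)$, expanding via \eqref{eq:FA3}, and using $\phi(F)\geq 0$ together with $\sum_F\phi(F)=1$ produces
\[
120\,d_{C_5}(p)\;=\;\phi(OPT)\;\geq\;\sum_{F\in\mathcal{F}_5} c_F\,\phi(F)\;\geq\;\min_{F\in\mathcal{F}_5} c_F,
\]
with $c_F$ exactly as in \eqref{eq:cF}. Dividing by $120$ yields the claimed bound.

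There is essentially no hard step: the lemma is a packaging of standard flag algebra positivity, the sampling identity $\phi(OPT)=120\,d_{C_5}(p)$, and the convex combination argument in the last line. The only point that deserves care is making sure one extracts a genuine homomorphism in $Hom^+(\mathcal{A},\mathbb{R})$ with the prescribed edge density from the sequence $(G_n)$, which is a now-standard compactness/diagonalization argument and is the aspect I would want to cite cleanly rather than reprove.
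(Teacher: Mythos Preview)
Your argument is correct and is essentially the same as the paper's: both pass to a convergent subsequence of an appropriate sequence of graphs with edge density $p+o(1)$, obtain a homomorphism $\phi\in Hom^+(\mathcal A,\mathbb R)$ satisfying \eqref{eq:FA2}, apply the chain \eqref{eq:FA}, and use \eqref{eq:OPTAsC5} to convert $\phi(OPT)$ into $120\,d_{C_5}(p)$. The only cosmetic difference is that the paper phrases this as a proof by contradiction (assume a sequence stays strictly below the bound), whereas you work directly with a minimizing sequence.
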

\begin{proof} Suppose on the contrary we can find an increasing sequence of graphs $G_n$ with edge density $p+o(1)$ such that $d_5(G_n)$  stays strictly below the stated bound. Take a convergent subsequence and let $\phi\in Hom^*(\C A,\I R)$ be its limit. It satisfies~\eqref{eq:FA2} so the bound in~\eqref{eq:FA} applies to~$\phi$. However, this contradicts~\eqref{eq:OPTAsC5}.\end{proof}

\subsubsection{Finding the optimum}\label{subsec:nice_p}

Let an integer $k \geq 3$ be fixed. Let $p$ and $\lambda$ be as in~\eqref{eq:p}.
% For convenience, let $\beta=10k\alpha$; then 
%Thus $\phi(OPT)\ge \sum_{F\in\mathcal F_5} c_F\,\phi(F)$ where
%\[
%c_F =  c_F^{OPT} + 10(k-1)\alpha -10k\alpha \cdot p(K_2,F) - c_F^M.
%\]
By Lemma~\ref{lm:lower}, in order to finish the proof of Theorem~\ref{thm:main}, it is enough to present some $M\succeq 0$  and $\alpha\in\I R$ with $c_F \ge 5!\,\lambda$ for every $F \in \mathcal{F}_5$.
Let 
%$\beta=10k\alpha$, where
\[
 \alpha=\frac{1}{k^3}\left(60k^3 - 240k^2 + 360k - 192\right).
%\alpha= \frac{1}{k^4}\left(30k^3 - 120k^2 + 180k - 96\right).
\]
%It is easy to check that $\alpha>0$ for any $k\ge 3$.
In order to define the matrix $M$ we define first two matrices $A$ and $B$ as follows:
\[
A = 
% [inline block 0: 36 envs, 20722 chars -> data_tex | \begin{pmatrix}              32k^2 - 96k + 96    &                                 0                          & 4k^2 - 1...]
}}
&= \frac{1}{5k^4}(61k^4-300k^3+600k^2-600k+240).
\end{align*}
Since the entries only ever disagree in the $k^4$ coefficient, it is easy to see that the smallest $c_F$'s are in the first two rows and are equal to $5!\,\lambda$, as desired. (Recall that this proves the lower bound on $d_{C_5}(p)$ of Theorem~\ref{thm:main} by Lemma~\ref{lm:lower}.)
\hide{
	. Thus we get the desired lower bound:
$$
\phi(OPT) \ge \min_{F \in \mathcal{F}_5}  c_F 
=  \frac{1}{5k^4}(60k^4-300k^3+600k^2-600k+240) = 5!\,\lambda.
%12 -\frac{60}{k} + \frac{120}{k^2} - \frac{120}{k^3} + \frac{48}{k^4}.
$$
}

\subsubsection{Products of graphs and determining $c_F^M$ coefficients}\label{subsection:cFM}

First, we define the product of unlabeled graphs. Recall that for a graph $G$ we denote $|V(G)|$  by $|G|$.
Let $F_1,F_2,F$ in $\mathcal{F}$ be such that $|F_1| + |F_2| \leq  |F|$.
Choose uniformly at random two disjoint subsets $X_1$ and $X_2$ of $V(F)$ of sizes $|F_1|$ and $|F_2|$, respectively.
Denote by $p(F_1,F_2;F)$ the probability that $F[X_1]$ is isomorphic to $F_1$ and $F[X_2]$ is isomorphic to $F_2$.
Finally, the product of $F_1$ and $F_2$ is defined as
\[
	F_1 \times F_2 = \sum_{F \in \mathcal{F}_{|F_1|+|F_2|}} p(F_1,F_2;F)\cdot F.
\]
The product can be extended to linear combinations of graphs and gives a multiplication operation in $\mathcal{A}$. 

The product in $\mathcal{A}^1$ is defined along the same lines as in $\mathcal{A}$ but the intersection of $X_1$ and $X_2$ is exactly the labeled vertex. A more precise definition follows.
Let $F_1,F_2,F$ in $\mathcal{F}^1$ such that $|F_1| + |F_2| \leq  |F|-1$.
Choose uniformly at random subsets $X_1$ and $X_2$ of $V(F)$ of sizes $|F_1|$ and $|F_2|$, respectively whose intersection is exactly the one labeled vertex.
Denote by $p(F_1,F_2;F)$ the probability that $F[X_1]$ is isomorphic to $F_1$ and $F[X_2]$ is isomorphic to $F_2$, where isomorphism preserves the labeled vertex.
Finally, the product of $F_1$ and $F_2$ is defined as
\[
	F_1 \times F_2 = \sum_{F \in \mathcal{F}_{|F_1|+|F_2|-1}} p(F_1,F_2;F)\cdot F.
\]

Next we define the unlabeling operator $\left\llbracket\  \cdot\ \right\rrbracket_1: \mathcal{F}^1 \to \mathbb{R}\mathcal{F}$. We extend  $\left\llbracket\  \cdot\ \right\rrbracket_1$ to a linear function $\mathbb{R}\mathcal{F}^1 \to \mathbb{R}\mathcal{F}$ which we also call $\left\llbracket\  \cdot\ \right\rrbracket_1$.
Let $F \in \mathcal{F}^1$. Denote by $G \in \mathcal{F}$ the graph obtained from $F$ by unlabeling the labeled vertex.  
Let $v$ be a vertex in $G$ chosen uniformly at random. 
Let $q$ be the probability that $G$ with labeled $v$ is isomorphic to $F$. Then 
\[
\left\llbracket F \right\rrbracket_1 = q \cdot G.
\]

Recall that $X$ is the vector of all 3-vertex labeled graphs from $\C F^1$: 
\[
X = (X_1,X_2,X_3,X_4,X_5,X_6)^T = \left( 
\vc{\begin{tikzpicture}\outercycle{4}{3}
\draw[edge_color1] (x0)--(x1);\draw[edge_color1] (x0)--(x2);  \draw[edge_color1] (x1)--(x2);    
\draw (x0) node[labeled_vertex]{};\draw (x1) node[unlabeled_vertex]{};\draw (x2) node[unlabeled_vertex]{};
\end{tikzpicture}}, 
\vc{\begin{tikzpicture}\outercycle{4}{3}
\draw[edge_color1] (x0)--(x1);\draw[edge_color1] (x0)--(x2);  \draw[edge_color2] (x1)--(x2);    
\draw (x0) node[labeled_vertex]{};\draw (x1) node[unlabeled_vertex]{};\draw (x2) node[unlabeled_vertex]{};
\end{tikzpicture}},
\vc{\begin{tikzpicture}\outercycle{4}{3}
\draw[edge_color1] (x0)--(x1);\draw[edge_color2] (x0)--(x2);  \draw[edge_color1] (x1)--(x2);    
\draw (x0) node[labeled_vertex]{};\draw (x1) node[unlabeled_vertex]{};\draw (x2) node[unlabeled_vertex]{};
\end{tikzpicture}},
\vc{\begin{tikzpicture}\outercycle{4}{3}
\draw[edge_color1] (x0)--(x1);\draw[edge_color2] (x0)--(x2);  \draw[edge_color2] (x1)--(x2);    
\draw (x0) node[labeled_vertex]{};\draw (x1) node[unlabeled_vertex]{};\draw (x2) node[unlabeled_vertex]{};
\end{tikzpicture}},
\vc{\begin{tikzpicture}\outercycle{4}{3}
\draw[edge_color2] (x0)--(x1);\draw[edge_color2] (x0)--(x2);  \draw[edge_color1] (x1)--(x2);    
\draw (x0) node[labeled_vertex]{};\draw (x1) node[unlabeled_vertex]{};\draw (x2) node[unlabeled_vertex]{};
\end{tikzpicture}},
\vc{\begin{tikzpicture}\outercycle{4}{3}
\draw[edge_color2] (x0)--(x1);\draw[edge_color2] (x0)--(x2);  \draw[edge_color2] (x1)--(x2);    
\draw (x0) node[labeled_vertex]{};\draw (x1) node[unlabeled_vertex]{};\draw (x2) node[unlabeled_vertex]{};
\end{tikzpicture}}
\right)^T.
\]

%We now define the product $G \times H$ of two types $G$ and $H$. Define a {\it 5-type} to be a graph whose vertex set consists of one special vertex $y$ (the yellow square), two green vertices and two red vertices. Vertices of the same color are indistinguishable. Formally, $G \times H$ is the sum of all 5-types  $F$ such that $F$ induced on the yellow and red vertices is $G$, and $F$ induced on yellow and green is $H$. So for example we can evaluate $X_2 \times X_5$ as
\newcommand{\Xa}{ \vc{\begin{tikzpicture}\outercycle{4}{3}
\draw[edge_color1] (x0)--(x1);\draw[edge_color1] (x0)--(x2);  \draw[edge_color1] (x1)--(x2);    
\draw (x0) node[labeled_vertex]{};\draw (x1) node[unlabeled_vertex]{};\draw (x2) node[unlabeled_vertex]{};
\end{tikzpicture}}  }
\newcommand{\Xb}{\vc{\begin{tikzpicture}\outercycle{4}{3}
\draw[edge_color1] (x0)--(x1);\draw[edge_color1] (x0)--(x2);  \draw[edge_color2] (x1)--(x2);    
\draw (x0) node[labeled_vertex]{};\draw (x1) node[unlabeled_vertex]{};\draw (x2) node[unlabeled_vertex]{};
\end{tikzpicture}}   }
\newcommand{\Xc}{\vc{\begin{tikzpicture}\outercycle{4}{3}
\draw[edge_color1] (x0)--(x1);\draw[edge_color2] (x0)--(x2);  \draw[edge_color1] (x1)--(x2);    
\draw (x0) node[labeled_vertex]{};\draw (x1) node[unlabeled_vertex]{};\draw (x2) node[unlabeled_vertex]{};
\end{tikzpicture}}   }
\newcommand{\Xd}{\vc{\begin{tikzpicture}\outercycle{4}{3}
\draw[edge_color1] (x0)--(x1);\draw[edge_color2] (x0)--(x2);  \draw[edge_color2] (x1)--(x2);    
\draw (x0) node[labeled_vertex]{};\draw (x1) node[unlabeled_vertex]{};\draw (x2) node[unlabeled_vertex]{};
\end{tikzpicture}}   }
\newcommand{\Xe}{\vc{\begin{tikzpicture}\outercycle{4}{3}
\draw[edge_color2] (x0)--(x1);\draw[edge_color2] (x0)--(x2);  \draw[edge_color1] (x1)--(x2);    
\draw (x0) node[labeled_vertex]{};\draw (x1) node[unlabeled_vertex]{};\draw (x2) node[unlabeled_vertex]{};
\end{tikzpicture}}   }
\newcommand{\Xf}{\vc{\begin{tikzpicture}\outercycle{4}{3}
\draw[edge_color2] (x0)--(x1);\draw[edge_color2] (x0)--(x2);  \draw[edge_color2] (x1)--(x2);    
\draw (x0) node[labeled_vertex]{};\draw (x1) node[unlabeled_vertex]{};\draw (x2) node[unlabeled_vertex]{};
\end{tikzpicture}}}
%
%
%\[
% \Xb \times  \Xe  =  \sum \vc{\begin{tikzpicture}\outercycle{6}{5}
%\draw[edge_color2] (x0)--(x1);\draw[edge_color2] (x0)--(x2);\draw[edge_color1] (x0)--(x3);\draw[edge_color1] (x0)--(x4);  \draw[edge_color1] (x1)--(x2);\draw[edge_color1] (x1)--(x3);\draw[edge_color1] (x1)--(x4);  \draw[edge_color1] (x2)--(x3);\draw[edge_color1] (x2)--(x4);  \draw[edge_color2] (x3)--(x4);    
%\draw[edge_color5] (x2)--(x4);\draw[edge_color5] (x2)--(x3);\draw[edge_color5] (x1)--(x4);\draw[edge_color5] (x1)--(x3);
%\draw (x0) node[labeled_vertex]{};\draw (x1) node[vertex_color3]{};\draw (x2) node[vertex_color3]{};\draw (x3) node[vertex_color1]{};\draw (x4) node[vertex_color1]{};
%\end{tikzpicture}}
%\]

In Appendix~\ref{appendix:A} we list all coefficients for products in $\mathcal{F}^1_3$, after unlabeling and multiplying by a scaling factor of 30 to clear denominators. 
Then we obtain that
\[
\llbracket X^T M X \rrbracket_1 = \sum_{i=1}^6 \sum_{j=1}^6 M_{i,j} \llbracket X_i \times X_j \rrbracket_1 = \sum_{F \in \mathcal{F}_5} c_F^M \cdot F, 
\]
since each $\llbracket X_i \times X_j \rrbracket_1$ is a linear combination of graphs in $\mathcal{F}_5$.

\subsubsection{Guessing matrices $A$ and $B$}\label{subsec:guessing}

In this paragraph we describe how we obtained the matrices $A$ and $B$.
First, we used semidefinite programming to find a matrix $M$ for several small odd values of $k$.
Notice that if \eqref{eq:FA} is applied to the extremal construction, then
the left-hand side is equal to the right-hand side. That means that all inequalities used are actually equalities.
In particular, $\phi\left(  \llbracket X^T M X \rrbracket_1  \right) = 0$. Since $M$ is a positive semidefinite
matrix, $X$ evaluated on our extremal example (the limit of $T_k^n$ as $n\to\infty$) must give an eigenvector of $M$ corresponding
to the eigenvalue 0. The matrix $B$ was obtained by projecting onto the space orthogonal to three zero
eigenvectors of  $M$. As noted before, we had one zero eigenvector to start with.
By looking at all eigenvectors of $M$, we managed to guess another zero eigenvector. 
We tried projection with the two zero eigenvectors and found the third one in the projection. 
After having obtained matrices $B$, we observed that a suitable $A$ exists even if we set the coordinate $[1,2]$ and $[2,1]$ to 0. With proper scaling of the objective function, we were getting nice matrices from the CSDP~\cite{Borchers1999} solver with all entries
integers. By using the solutions for several values of $k$, we calculated a polynomial function of $k$ fitting each 
entry in matrix $A$. Finally we observed that the same matrices $A$ and $B$ also work for even values of $k$.

\section{Stability}\label{sec:stability}

In this section we prove Theorem~\ref{th:stab}. For this purpose it will be convenient to rewrite our lower bound  as an asymptotic identity valid for an arbitrary graph. Fix $k\ge 3$. Let $p$ and $\lambda$ be as in~\eqref{eq:p}. Let the matrix $M\succeq 0$, $\alpha\in\I R
%=10k\alpha>0
$, and the reals $c_F^M,c_F^{OPT},c_F$, indexed by $F\in\C F_5$, be as previously.

Recall that $X=(X_1,\dots,X_6)^T$ is the vector of $3$-vertex rooted graphs defined in~\eqref{eq:X}. For a graph $G=(V,E)$ of order $n\ge 5$ and a vertex $r\in V$, let $Y_r$ be the column vector whose $i$-th component is the number of unordered 2-sets $\{u,v\}\subseteq V\setminus\{r\}$ such that the induced graph $G[\{r,u,v\}]$ rooted at $r$ is isomorphic to $X_i$.  Define 
$
\O{Y}=\frac4{5!}\,\sum_{r\in V} Y_r^TMY_r\ge 0.
$ 

Let us argue that
 \begin{equation}\label{eq:sigma1}
 \O{Y}=\sum_{F\in\C F_5} c_F^M P(F,G)+O(n^4),
 \end{equation}
 where for $F\in\C F_\ell$ we let $P(F,G)={n\choose \ell} p(F,G)$ be the number of $\ell$-sets inducing a copy of $F$ in $G$. Indeed, the $i$-th entry of $Y_r$ can be  written as a double sum $\frac12 \sum_{u\in V}\sum_{v\in V}$ of the indicator function that $r,u,v$ are distinct and the graph $G[\{r,u,v\}]$ when rooted at $r$ is isomorphic to $X_i$. Using this representation of $Y_r$ and expanding everything, we can write $\O{Y}$ as a sum over all $(r,u,v,u',v')\in V^5$ of some function that depends only on the graph induced by the (multi)set $(r,u,v,u',v')$ inside $G$. Apart of $O(n^4)$ terms when some of the vertices coincide, the remaining ones can be grouped by the isomorphism type $F\in\C F_5$ of $G[\{r,u,v,u',v'\}]$. For $F\in\C F_5$, each unordered $5$-set spanning an induced copy of $F$ in $G$ contributes the same amount (depending only on $F$ and $M$) and the coefficient $c_F^M$ was in fact defined by us to be equal to this common value. Thus~\eqref{eq:sigma1} holds.

Likewise, $P(K_2,G){n-2\choose 3}$ and ${n\choose 5}$ can be written as fixed linear combinations of $P(F,G)$ over $F\in\C F_5$. Also, $d_{C_5}(G)n^5=\sum_{F\in\C F_5} c_F^{OPT} P(F,G)$ is the number of $5$-cycles in $G$. Putting all together, we obtain the following identity valid for an arbitrary graph $G$:
 \begin{equation}\label{eq:main}
 d_{C_5}(G)\,n^5 +\frac{\alpha}{5!}\left(2P(K_2,G)n^3-pn^5\right)-\O{Y}+O(n^4)
 %\frac1n \sum_{r\in V} X(G,r)^TMX(G,r)
 =\sum_{F\in\C F_5} c_F P(F,G),
 \end{equation}
  where $c_F$ for $F\in\C F_5$ was defined to be exactly the contribution of each induced copy of $F$ in $G$ to the left-hand side while  all combinations when some vertices in the underlying 5-fold sum coincide are absorbed into the error term~$O(n^4)$.

 Note that if we multiply~\eqref{eq:main} by ${n\choose 5}^{-1}$ then the scaled terms in~\eqref{eq:main} will be asymptotically the same as in~\eqref{eq:FA} when $n\to\infty$. Since $\sum_{F\in\C F_5} P(F,G)={n\choose 5}$, the right-hand size of~\eqref{eq:main} can be lower bounded by ${n\choose 5}\min_{F\in\C F_5} c_F$, giving the required lower bound in Theorem~\ref{thm:main} since each $c_F$ is at least $5!\, \lambda$.

Let us turn to stability. Take any sequence of graphs $G_m$ of strictly increasing orders 
%$v_1<v_2<\dots$ 
such that 
\begin{equation}\label{eq:Gm}
|E(G_m)|\ge \left(p-\frac 1m\right){|G_m|\choose 2}\quad\mbox{and}\quad d_{C_5}(G_{m})\le \lambda+\frac1m,\qquad \mbox{for all }m\in\I N.
\end{equation}

Observe that if $c_F > \lambda$ for some $F\in\C F_5$, then the right-hand side of~\eqref{eq:main} is at least $
(\lambda+(c_F-\lambda)p(F,G)){n\choose 5}.
$ 
Thus we have that $p(F,G_m)=o(1)$ as $m\to\infty$ for every such $F$. By looking at the explicit formulas for $c_F$ near the end of Section~\ref{subsec:nice_p}, we see that there are 16 such graphs. They are collected into the list $\mathcal L$ in Figure~\ref{fg:L}, and are denoted by $L_1,\dots,L_{16}$ in this order.

\begin{figure}[h]
\begin{center}
\begin{tikzpicture}\outercycle{6}{5}
\draw[edge_color1] (x0)--(x1);\draw[edge_color1] (x0)--(x2);\draw[edge_color1] (x0)--(x3);\draw[edge_color1] (x0)--(x4);  \draw[edge_color1] (x1)--(x2);\draw[edge_color1] (x1)--(x3);\draw[edge_color1] (x1)--(x4);  \draw[edge_color1] (x2)--(x3);\draw[edge_color1] (x2)--(x4);  \draw[edge_color2] (x3)--(x4);    
\draw (x0) node[unlabeled_vertex]{};\draw (x1) node[unlabeled_vertex]{};\draw (x2) node[unlabeled_vertex]{};\draw (x3) node[unlabeled_vertex]{};\draw (x4) node[unlabeled_vertex]{};
\end{tikzpicture} 
\begin{tikzpicture}\outercycle{6}{5}
\draw[edge_color1] (x0)--(x1);\draw[edge_color1] (x0)--(x2);\draw[edge_color1] (x0)--(x3);\draw[edge_color1] (x0)--(x4);  \draw[edge_color1] (x1)--(x2);\draw[edge_color1] (x1)--(x3);\draw[edge_color1] (x1)--(x4);  \draw[edge_color1] (x2)--(x3);\draw[edge_color2] (x2)--(x4);  \draw[edge_color2] (x3)--(x4);    
\draw (x0) node[unlabeled_vertex]{};\draw (x1) node[unlabeled_vertex]{};\draw (x2) node[unlabeled_vertex]{};\draw (x3) node[unlabeled_vertex]{};\draw (x4) node[unlabeled_vertex]{};
\end{tikzpicture} 
\begin{tikzpicture}\outercycle{6}{5}
\draw[edge_color1] (x0)--(x1);\draw[edge_color1] (x0)--(x2);\draw[edge_color1] (x0)--(x3);\draw[edge_color1] (x0)--(x4);  \draw[edge_color1] (x1)--(x2);\draw[edge_color1] (x1)--(x3);\draw[edge_color2] (x1)--(x4);  \draw[edge_color1] (x2)--(x3);\draw[edge_color2] (x2)--(x4);  \draw[edge_color2] (x3)--(x4);    
\draw (x0) node[unlabeled_vertex]{};\draw (x1) node[unlabeled_vertex]{};\draw (x2) node[unlabeled_vertex]{};\draw (x3) node[unlabeled_vertex]{};\draw (x4) node[unlabeled_vertex]{};
\end{tikzpicture} 
\begin{tikzpicture}\outercycle{6}{5}
\draw[edge_color1] (x0)--(x1);\draw[edge_color1] (x0)--(x2);\draw[edge_color1] (x0)--(x3);\draw[edge_color1] (x0)--(x4);  \draw[edge_color1] (x1)--(x2);\draw[edge_color1] (x1)--(x3);\draw[edge_color1] (x1)--(x4);  \draw[edge_color2] (x2)--(x3);\draw[edge_color2] (x2)--(x4);  \draw[edge_color2] (x3)--(x4);    
\draw (x0) node[unlabeled_vertex]{};\draw (x1) node[unlabeled_vertex]{};\draw (x2) node[unlabeled_vertex]{};\draw (x3) node[unlabeled_vertex]{};\draw (x4) node[unlabeled_vertex]{};
\end{tikzpicture} 
\begin{tikzpicture}\outercycle{6}{5}
\draw[edge_color1] (x0)--(x1);\draw[edge_color1] (x0)--(x2);\draw[edge_color1] (x0)--(x3);\draw[edge_color1] (x0)--(x4);  \draw[edge_color1] (x1)--(x2);\draw[edge_color1] (x1)--(x3);\draw[edge_color2] (x1)--(x4);  \draw[edge_color2] (x2)--(x3);\draw[edge_color1] (x2)--(x4);  \draw[edge_color1] (x3)--(x4);    
\draw (x0) node[unlabeled_vertex]{};\draw (x1) node[unlabeled_vertex]{};\draw (x2) node[unlabeled_vertex]{};\draw (x3) node[unlabeled_vertex]{};\draw (x4) node[unlabeled_vertex]{};
\end{tikzpicture} 
\begin{tikzpicture}\outercycle{6}{5}
\draw[edge_color1] (x0)--(x1);\draw[edge_color1] (x0)--(x2);\draw[edge_color1] (x0)--(x3);\draw[edge_color1] (x0)--(x4);  \draw[edge_color1] (x1)--(x2);\draw[edge_color1] (x1)--(x3);\draw[edge_color2] (x1)--(x4);  \draw[edge_color2] (x2)--(x3);\draw[edge_color1] (x2)--(x4);  \draw[edge_color2] (x3)--(x4);    
\draw (x0) node[unlabeled_vertex]{};\draw (x1) node[unlabeled_vertex]{};\draw (x2) node[unlabeled_vertex]{};\draw (x3) node[unlabeled_vertex]{};\draw (x4) node[unlabeled_vertex]{};
\end{tikzpicture} 
\begin{tikzpicture}\outercycle{6}{5}
\draw[edge_color1] (x0)--(x1);\draw[edge_color1] (x0)--(x2);\draw[edge_color1] (x0)--(x3);\draw[edge_color1] (x0)--(x4);  \draw[edge_color1] (x1)--(x2);\draw[edge_color1] (x1)--(x3);\draw[edge_color2] (x1)--(x4);  \draw[edge_color2] (x2)--(x3);\draw[edge_color2] (x2)--(x4);  \draw[edge_color2] (x3)--(x4);    
\draw (x0) node[unlabeled_vertex]{};\draw (x1) node[unlabeled_vertex]{};\draw (x2) node[unlabeled_vertex]{};\draw (x3) node[unlabeled_vertex]{};\draw (x4) node[unlabeled_vertex]{};
\end{tikzpicture} 
\begin{tikzpicture}\outercycle{6}{5}
\draw[edge_color1] (x0)--(x1);\draw[edge_color1] (x0)--(x2);\draw[edge_color1] (x0)--(x3);\draw[edge_color2] (x0)--(x4);  \draw[edge_color1] (x1)--(x2);\draw[edge_color1] (x1)--(x3);\draw[edge_color2] (x1)--(x4);  \draw[edge_color2] (x2)--(x3);\draw[edge_color1] (x2)--(x4);  \draw[edge_color1] (x3)--(x4);    
\draw (x0) node[unlabeled_vertex]{};\draw (x1) node[unlabeled_vertex]{};\draw (x2) node[unlabeled_vertex]{};\draw (x3) node[unlabeled_vertex]{};\draw (x4) node[unlabeled_vertex]{};
\end{tikzpicture}\\[2mm] 
\begin{tikzpicture}\outercycle{6}{5}
\draw[edge_color1] (x0)--(x1);\draw[edge_color1] (x0)--(x2);\draw[edge_color1] (x0)--(x3);\draw[edge_color2] (x0)--(x4);  \draw[edge_color1] (x1)--(x2);\draw[edge_color1] (x1)--(x3);\draw[edge_color2] (x1)--(x4);  \draw[edge_color2] (x2)--(x3);\draw[edge_color1] (x2)--(x4);  \draw[edge_color2] (x3)--(x4);    
\draw (x0) node[unlabeled_vertex]{};\draw (x1) node[unlabeled_vertex]{};\draw (x2) node[unlabeled_vertex]{};\draw (x3) node[unlabeled_vertex]{};\draw (x4) node[unlabeled_vertex]{};
\end{tikzpicture} 
\begin{tikzpicture}\outercycle{6}{5}
\draw[edge_color1] (x0)--(x1);\draw[edge_color1] (x0)--(x2);\draw[edge_color1] (x0)--(x3);\draw[edge_color1] (x0)--(x4);  \draw[edge_color1] (x1)--(x2);\draw[edge_color2] (x1)--(x3);\draw[edge_color2] (x1)--(x4);  \draw[edge_color2] (x2)--(x3);\draw[edge_color2] (x2)--(x4);  \draw[edge_color1] (x3)--(x4);    
\draw (x0) node[unlabeled_vertex]{};\draw (x1) node[unlabeled_vertex]{};\draw (x2) node[unlabeled_vertex]{};\draw (x3) node[unlabeled_vertex]{};\draw (x4) node[unlabeled_vertex]{};
\end{tikzpicture} 
\begin{tikzpicture}\outercycle{6}{5}
\draw[edge_color1] (x0)--(x1);\draw[edge_color1] (x0)--(x2);\draw[edge_color1] (x0)--(x3);\draw[edge_color2] (x0)--(x4);  \draw[edge_color1] (x1)--(x2);\draw[edge_color2] (x1)--(x3);\draw[edge_color1] (x1)--(x4);  \draw[edge_color2] (x2)--(x3);\draw[edge_color2] (x2)--(x4);  \draw[edge_color1] (x3)--(x4);    
\draw (x0) node[unlabeled_vertex]{};\draw (x1) node[unlabeled_vertex]{};\draw (x2) node[unlabeled_vertex]{};\draw (x3) node[unlabeled_vertex]{};\draw (x4) node[unlabeled_vertex]{};
\end{tikzpicture} 
\begin{tikzpicture}\outercycle{6}{5}
\draw[edge_color1] (x0)--(x1);\draw[edge_color1] (x0)--(x2);\draw[edge_color1] (x0)--(x3);\draw[edge_color2] (x0)--(x4);  \draw[edge_color1] (x1)--(x2);\draw[edge_color2] (x1)--(x3);\draw[edge_color1] (x1)--(x4);  \draw[edge_color2] (x2)--(x3);\draw[edge_color2] (x2)--(x4);  \draw[edge_color2] (x3)--(x4);    
\draw (x0) node[unlabeled_vertex]{};\draw (x1) node[unlabeled_vertex]{};\draw (x2) node[unlabeled_vertex]{};\draw (x3) node[unlabeled_vertex]{};\draw (x4) node[unlabeled_vertex]{};
\end{tikzpicture} 
\begin{tikzpicture}\outercycle{6}{5}
\draw[edge_color1] (x0)--(x1);\draw[edge_color1] (x0)--(x2);\draw[edge_color1] (x0)--(x3);\draw[edge_color2] (x0)--(x4);  \draw[edge_color1] (x1)--(x2);\draw[edge_color2] (x1)--(x3);\draw[edge_color2] (x1)--(x4);  \draw[edge_color2] (x2)--(x3);\draw[edge_color2] (x2)--(x4);  \draw[edge_color1] (x3)--(x4);    
\draw (x0) node[unlabeled_vertex]{};\draw (x1) node[unlabeled_vertex]{};\draw (x2) node[unlabeled_vertex]{};\draw (x3) node[unlabeled_vertex]{};\draw (x4) node[unlabeled_vertex]{};
\end{tikzpicture} 
\begin{tikzpicture}\outercycle{6}{5}
\draw[edge_color1] (x0)--(x1);\draw[edge_color1] (x0)--(x2);\draw[edge_color1] (x0)--(x3);\draw[edge_color2] (x0)--(x4);  \draw[edge_color2] (x1)--(x2);\draw[edge_color2] (x1)--(x3);\draw[edge_color1] (x1)--(x4);  \draw[edge_color2] (x2)--(x3);\draw[edge_color1] (x2)--(x4);  \draw[edge_color1] (x3)--(x4);    
\draw (x0) node[unlabeled_vertex]{};\draw (x1) node[unlabeled_vertex]{};\draw (x2) node[unlabeled_vertex]{};\draw (x3) node[unlabeled_vertex]{};\draw (x4) node[unlabeled_vertex]{};
\end{tikzpicture} 
\begin{tikzpicture}\outercycle{6}{5}
\draw[edge_color1] (x0)--(x1);\draw[edge_color1] (x0)--(x2);\draw[edge_color1] (x0)--(x3);\draw[edge_color2] (x0)--(x4);  \draw[edge_color2] (x1)--(x2);\draw[edge_color2] (x1)--(x3);\draw[edge_color1] (x1)--(x4);  \draw[edge_color2] (x2)--(x3);\draw[edge_color1] (x2)--(x4);  \draw[edge_color2] (x3)--(x4);    
\draw (x0) node[unlabeled_vertex]{};\draw (x1) node[unlabeled_vertex]{};\draw (x2) node[unlabeled_vertex]{};\draw (x3) node[unlabeled_vertex]{};\draw (x4) node[unlabeled_vertex]{};
\end{tikzpicture} 
\begin{tikzpicture}\outercycle{6}{5}
\draw[edge_color1] (x0)--(x1);\draw[edge_color1] (x0)--(x2);\draw[edge_color2] (x0)--(x3);\draw[edge_color2] (x0)--(x4);  \draw[edge_color2] (x1)--(x2);\draw[edge_color1] (x1)--(x3);\draw[edge_color2] (x1)--(x4);  \draw[edge_color2] (x2)--(x3);\draw[edge_color1] (x2)--(x4);  \draw[edge_color2] (x3)--(x4);    
\draw (x0) node[unlabeled_vertex]{};\draw (x1) node[unlabeled_vertex]{};\draw (x2) node[unlabeled_vertex]{};\draw (x3) node[unlabeled_vertex]{};\draw (x4) node[unlabeled_vertex]{};
\end{tikzpicture}
\end{center}
\caption{The list $\mathcal L=(L_1,\dots,L_{16})$}\label{fg:L}
\end{figure}
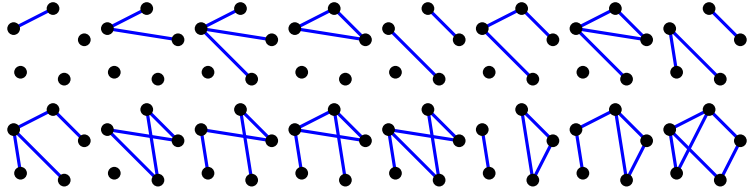

\hide{
As a matter of fact for $3\le k\le 73$ one can show by using flags with more labeled vertices that the only possible graphs with nonzero density must belong to the following list~$\mathcal{L}$:
\begin{center}
 \begin{tikzpicture}\outercycle{6}{5}
\draw[edge_color1] (x0)--(x1);\draw[edge_color1] (x0)--(x2);\draw[edge_color1] (x0)--(x3);\draw[edge_color1] (x0)--(x4);  \draw[edge_color1] (x1)--(x2);\draw[edge_color1] (x1)--(x3);\draw[edge_color1] (x1)--(x4);  \draw[edge_color1] (x2)--(x3);\draw[edge_color1] (x2)--(x4);  \draw[edge_color1] (x3)--(x4);    
\draw (x0) node[unlabeled_vertex]{};\draw (x1) node[unlabeled_vertex]{};\draw (x2) node[unlabeled_vertex]{};\draw (x3) node[unlabeled_vertex]{};\draw (x4) node[unlabeled_vertex]{};
\end{tikzpicture} 
\begin{tikzpicture}\outercycle{6}{5}
\draw[edge_color1] (x0)--(x1);\draw[edge_color1] (x0)--(x2);\draw[edge_color1] (x0)--(x3);\draw[edge_color2] (x0)--(x4);  \draw[edge_color1] (x1)--(x2);\draw[edge_color1] (x1)--(x3);\draw[edge_color2] (x1)--(x4);  \draw[edge_color1] (x2)--(x3);\draw[edge_color2] (x2)--(x4);  \draw[edge_color2] (x3)--(x4);    
\draw (x0) node[unlabeled_vertex]{};\draw (x1) node[unlabeled_vertex]{};\draw (x2) node[unlabeled_vertex]{};\draw (x3) node[unlabeled_vertex]{};\draw (x4) node[unlabeled_vertex]{};
\end{tikzpicture} 
\begin{tikzpicture}\outercycle{6}{5}
\draw[edge_color1] (x0)--(x1);\draw[edge_color1] (x0)--(x2);\draw[edge_color2] (x0)--(x3);\draw[edge_color2] (x0)--(x4);  \draw[edge_color1] (x1)--(x2);\draw[edge_color2] (x1)--(x3);\draw[edge_color2] (x1)--(x4);  \draw[edge_color2] (x2)--(x3);\draw[edge_color2] (x2)--(x4);  \draw[edge_color1] (x3)--(x4);    
\draw (x0) node[unlabeled_vertex]{};\draw (x1) node[unlabeled_vertex]{};\draw (x2) node[unlabeled_vertex]{};\draw (x3) node[unlabeled_vertex]{};\draw (x4) node[unlabeled_vertex]{};
\end{tikzpicture} 
\begin{tikzpicture}\outercycle{6}{5}
\draw[edge_color1] (x0)--(x1);\draw[edge_color1] (x0)--(x2);\draw[edge_color2] (x0)--(x3);\draw[edge_color2] (x0)--(x4);  \draw[edge_color1] (x1)--(x2);\draw[edge_color2] (x1)--(x3);\draw[edge_color2] (x1)--(x4);  \draw[edge_color2] (x2)--(x3);\draw[edge_color2] (x2)--(x4);  \draw[edge_color2] (x3)--(x4);    
\draw (x0) node[unlabeled_vertex]{};\draw (x1) node[unlabeled_vertex]{};\draw (x2) node[unlabeled_vertex]{};\draw (x3) node[unlabeled_vertex]{};\draw (x4) node[unlabeled_vertex]{};om 
\end{tikzpicture} 
\begin{tikzpicture}\outercycle{6}{5}
\draw[edge_color1] (x0)--(x1);\draw[edge_color2] (x0)--(x2);\draw[edge_color2] (x0)--(x3);\draw[edge_color2] (x0)--(x4);  \draw[edge_color2] (x1)--(x2);\draw[edge_color2] (x1)--(x3);\draw[edge_color2] (x1)--(x4);  \draw[edge_color1] (x2)--(x3);\draw[edge_color2] (x2)--(x4);  \draw[edge_color2] (x3)--(x4);    
\draw (x0) node[unlabeled_vertex]{};\draw (x1) node[unlabeled_vertex]{};\draw (x2) node[unlabeled_vertex]{};\draw (x3) node[unlabeled_vertex]{};\draw (x4) node[unlabeled_vertex]{};
\end{tikzpicture} 
\begin{tikzpicture}\outercycle{6}{5}
\draw[edge_color1] (x0)--(x1);\draw[edge_color2] (x0)--(x2);\draw[edge_color2] (x0)--(x3);\draw[edge_color2] (x0)--(x4);  \draw[edge_color2] (x1)--(x2);\draw[edge_color2] (x1)--(x3);\draw[edge_color2] (x1)--(x4);  \draw[edge_color2] (x2)--(x3);\draw[edge_color2] (x2)--(x4);  \draw[edge_color2] (x3)--(x4);    
\draw (x0) node[unlabeled_vertex]{};\draw (x1) node[unlabeled_vertex]{};\draw (x2) node[unlabeled_vertex]{};\draw (x3) node[unlabeled_vertex]{};\draw (x4) node[unlabeled_vertex]{};
\end{tikzpicture} 
\begin{tikzpicture}\outercycle{6}{5}
\draw[edge_color2] (x0)--(x1);\draw[edge_color2] (x0)--(x2);\draw[edge_color2] (x0)--(x3);\draw[edge_color2] (x0)--(x4);  \draw[edge_color2] (x1)--(x2);\draw[edge_color2] (x1)--(x3);\draw[edge_color2] (x1)--(x4);  \draw[edge_color2] (x2)--(x3);\draw[edge_color2] (x2)--(x4);  \draw[edge_color2] (x3)--(x4);    
\draw (x0) node[unlabeled_vertex]{};\draw (x1) node[unlabeled_vertex]{};\draw (x2) node[unlabeled_vertex]{};\draw (x3) node[unlabeled_vertex]{};\draw (x4) node[unlabeled_vertex]{};
\end{tikzpicture} 
\end{center} 

We perform a calculation analogous to the previous calculation.
The main difference is that we include $\llbracket X_2^TM_2X_2\rrbracket$, where $M_2$ is a positive semidefinite matrix in $\mathbb{R}^2$ and
\[
X_2 = \left( 
\vc{\begin{tikzpicture}\outercycle{5}{4}
\draw[edge_color1] (x0)--(x1);\draw[edge_color2] (x0)--(x2);\draw[edge_color1] (x0)--(x3);  \draw[edge_color2] (x1)--(x2);\draw[edge_color2] (x1)--(x3);  \draw[edge_color2] (x2)--(x3);    
\draw (x0) node[labeled_vertex]{};\draw (x1) node[labeled_vertex]{};\draw (x2) node[labeled_vertex]{};\draw (x3) node[unlabeled_vertex]{};
\end{tikzpicture}},
\vc{\begin{tikzpicture}\outercycle{5}{4}
\draw[edge_color1] (x0)--(x1);\draw[edge_color2] (x0)--(x2);\draw[edge_color2] (x0)--(x3);  \draw[edge_color2] (x1)--(x2);\draw[edge_color1] (x1)--(x3);  \draw[edge_color2] (x2)--(x3);    
\draw (x0) node[labeled_vertex]{};\draw (x1) node[labeled_vertex]{};\draw (x2) node[labeled_vertex]{};\draw (x3) node[unlabeled_vertex]{};
\end{tikzpicture}}
\right).
\]
For each $k \in \{2,\ldots,73\}$, we were able to construct particular $M$ and $M_2$, such that
only graphs  in $\mathcal{L}$ may have nonzero density.
But unlike in the previous case, we were not able to construct $M$ and $M_2$ as functions of $k$.
Certificates for the flag algebra calculations are available at~\cite{L2018}.

 For convenience, we restate here the statement of Theorem~\ref{thm:stability}.
\begin{thm_stability} Let $G$ be a graph on $n$ vertices for large $n$, such that $G$ has edge density $p=1-\frac1k$ for $k \ge 2$ and 
\[
d_{C_5}(G) \le d_{C_5}(p)+\eps
\]
for some positive but sufficiently small $\eps$. Assume further that the only induced subgraphs on five vertices with density more than $\eps$ are the graphs in list $\mathcal{L}$ (we know that this assumption holds for $2\le k \le 73$). Then $G$ has edit distance at most $\delta n^2$ from the Tur\'an graph $T^n_k$, for some function $\delta = \delta(\eps) \rightarrow 0$  as $\eps\rightarrow 0$. 
\end{thm_stability}
}%hide

Let the \emph{co-cherry} $\O{P_2}$ be the complement of the 2-edge path $P_2$, that is, $\O{P_2}$ is the graph with 3 vertices and 1 edge. Next, we show that its density in $G_m$ must also be $o(1)$. Note that there are 5-vertex graphs not in the list $\C L$ that contain the co-cherry. Thus the naive approach does not work and a slightly more involved argument is needed.

\begin{lemma}\label{lm:NoCocherries} For every sequence  of graphs $G_m$ as in~\eqref{eq:Gm}, we have that 
	$$
	\lim_{m\to\infty} p(\O{P_2},G_m)=0.
	$$
\end{lemma}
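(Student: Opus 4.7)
The plan is to combine two consequences of~\eqref{eq:main} for the near-extremal sequence $G_m$. First, from the assumptions $|E(G_m)|\ge(p-1/m)\binom{|G_m|}{2}$ and $d_{C_5}(G_m)\le\lambda+1/m$, the identity~\eqref{eq:main} forces: (a) $p(F,G_m)\to 0$ for every $F\in\mathcal L$, i.e.\ for every $F\in\mathcal F_5$ with $c_F>5!\,\lambda$; and (b) the averaged PSD slack $\overline Y=\sum_r Y_r^TMY_r$ is $o(n^5)$. Since $M=\tfrac{3}{2k^4}B^TAB$ with $A\succ 0$, fact (b) gives $\sum_r\|BY_r\|^2=o(n^5)$, and Cauchy--Schwarz then yields $\sum_r|(BY_r)_i|=o(n^3)$ for each row $i$ of $B$.

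The next step is to interpret the averaged kernel equations as asymptotic identities between the induced 3-vertex densities $\alpha,\sigma,\beta,\gamma$ of $\overline{K_3},\overline{P_2},P_2,K_3$ in $G_m$. Using the bijections $\sum_r y_1(r)=3\alpha\binom{n}{3}$, $\sum_r y_2(r)=\sigma\binom{n}{3}$, $\sum_r y_3(r)=2\sigma\binom{n}{3}$, $\sum_r y_5(r)=\beta\binom{n}{3}$, $\sum_r y_6(r)=3\gamma\binom{n}{3}$, summing the second row of $BY_r$ over $r$ yields
\[
(k-1)\sigma+(k-2)\beta-3\gamma=o(1).
\]
Together with the edge-density identity $\sigma+2\beta+3\gamma=3p+o(1)$ this gives $\sigma+\beta=3p/k+o(1)$ and $\gamma=\gamma_T+\sigma/3+o(1)$, where $\gamma_T=(k-1)(k-2)/k^2$ is the Tur\'an triangle density.

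To finish, I would combine the preceding with the combinatorial co-cherry identity
\[
\sigma\binom{n}{3}=n|E(G_m)|-\sum_v d_v^2+3|K_3(G_m)|+O(n^2),
\]
obtained by double-counting co-cherries through the edge and the isolated vertex, together with Cauchy--Schwarz $\sum_v d_v^2\ge 4|E(G_m)|^2/n$. This yields $\sigma\le 3(\gamma-\gamma_T)+o(1)$; coupled with the kernel equality $\gamma=\gamma_T+\sigma/3+o(1)$, it forces both Cauchy--Schwarz on degrees and the pointwise Moon--Moser inequality $|N(u)\cap N(v)|\ge d_u+d_v-n$ (averaged over edges) to be asymptotically tight. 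Tightness of the former gives $\sum_v(d_v-p(n-1))^2=o(n^3)$, so the degree sequence is essentially regular. Tightness of the latter is significant because the slack $|N(u)\cap N(v)|-(d_u+d_v-n)$ for an edge $uv$ counts exactly the vertices non-adjacent to both $u$ and $v$ (i.e.\ the co-cherry partners of $uv$). The final step, which is the main obstacle, is to convert near-regularity together with vanishing of $p(F,G_m)$ for $F\in\mathcal L$ into $\sigma=o(1)$: the plan is to expand $\sigma=\sum_F p(\overline{P_2},F)p(F,G_m)$, drop the contribution of $F\in\mathcal L$ which is $o(1)$, and use the 3-vertex identities of the previous paragraph (which pin down $\alpha,\beta,\gamma$ linearly in $\sigma$) together with a short case analysis to rule out any near-regular ``hybrid'' 5-vertex profile in $\mathcal L^c$ sustaining $\sigma>0$ while keeping $d_{C_5}(G_m)\to\lambda$. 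The delicate point is this last case analysis, which is precisely the ``slightly more involved argument'' flagged in the preceding remark.
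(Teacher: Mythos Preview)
Your derivations up through near-regularity are correct: from~\eqref{eq:main} you legitimately extract $p(F,G_m)\to 0$ for $F\in\mathcal L$ and $\sum_r\|BY_r\|^2=o(n^5)$; summing row~2 of $B$ gives $(k-1)\sigma+(k-2)\beta-3\gamma=o(1)$, hence $\gamma=\gamma_T+\sigma/3+o(1)$; and combining the exact identity $\sigma\binom{n}{3}=n|E|-\sum_v d_v^2+3\gamma\binom{n}{3}$ with Cauchy--Schwarz does force $\sum_v(d_v-\bar d)^2=o(n^3)$.

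However, the argument genuinely stalls here. First, the appeal to Moon--Moser is circular: the averaged Moon--Moser slack $\sum_{uv\in E}\bigl(|N(u)\cap N(v)|-(d_u+d_v-n)\bigr)$ \emph{equals} $\sigma\binom{n}{3}+O(n^2)$, so ``Moon--Moser is asymptotically tight'' is exactly the statement $\sigma=o(1)$ you are trying to prove, not a tool toward it. Second, and more seriously, the kernel condition plus near-regularity do \emph{not} determine $\sigma$. For a typical vertex $r$ with $Y_r\approx a_1 z_1+a_2 z_2+a_3 z_3\in\ker B$ (in the basis~\eqref{eq:Zs}), the constraints $y_i\ge 0$, $\sum_i y_i=1$ and $d_r\approx pn$ reduce to $2a_1+a_3=2/k^2$ and $0\le a_2\le (k-1)/k^2$, leaving $a_2$ (hence $y_2(r)$ and ultimately $\sigma$) a free parameter. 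Averaging rows~1 and~3 of $B$ gives nothing new (row~3 repeats row~2, and row~1 is automatically satisfied once the others are). So the entire content of the lemma has been pushed into your unperformed ``short case analysis on $\mathcal L^c$'', which is not short: many $F\in\mathcal F_5\setminus\mathcal L$ contain $\overline{P_2}$, and you have given no mechanism to combine their surviving densities into $\sigma=o(1)$.

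The paper's proof takes a completely different, purely combinatorial route that never touches the kernel or regularity. It applies the induced removal lemma to destroy all copies of the graphs in $\mathcal L$ together with the $4$-vertex graphs $F_0,F_1,F_2$ (having $0$, $1$, $2$ disjoint edges) of density $o(1)$. A one-vertex extension check shows that in the cleaned graph any copy of $F_1$ or $F_2$ would have to be complete to its complement, giving at most $O(n)$ such copies, hence none after removal; similarly every copy of $F_0$ sits in an independent set complete to its complement. The residual set $B$ outside these independent sets induces a graph whose complement is $C_4$-free (since any $C_4$ in $\overline{G[B]}$ yields one of $F_0,F_1,F_2$), hence has $O(n^{3/2})$ edges. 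Thus the cleaned graph is $O(n^{3/2})$-close to complete partite, and $\sigma=o(1)$ follows.
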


\begin{proof} Let $m$ be sufficiently large, $G=G_m$, $V=V(G)$ and $n=|V|$.
For $i\in\{0,1,2\}$, let $F_i$ be the (unique) graph of order 4 with $i$ disjoint edges. Let $\C L'=\C L\cup\{F_0,F_1,F_2\}$. 

Apply the induced removal lemma (see, e.g.,~\cite{Alon2000,ConFox2013}) to $G$
to destroy  all induced graphs in $\C L'$ whose density is $o(1)$.
Formally, let $f=n^{-1}+\max\{ p(L,G_m)\mid L\in\C L\}$ (and let initially $G=G_m$). As long as there is at least one $F\in\C L'$ with $0<p(F,G)\le f$, change as few as possible adjacencies in $G$ to destroy all copies of all such $F$ so that, additionally, no graph in $\C L'$ absent from $G$ is introduced.
Since $f$ tends to $0$ as $m\to\infty$ and the above iteration is applied at most $|\C L'|$ times (in fact, at most $|\C L'\setminus \C L|+1=4$ times), we change $o(n^2)$ edges in total by the induced removal lemma. Also, the final graph $G$ contains no graph from the list $\C L$ since the first iteration destroyed all such subgraphs by our choice of~$f$.
 % of the 16 graphs of order 5 from Figure~\ref{fg:L}. 

\begin{claim}\label{cl:NoF1} $G$ contains no induced $F_1$ (i.e.,\ 4 vertices spanning exactly one edge).\end{claim}

\begin{proof} Take a copy of $F_1$ and add one new vertex $x$ of degree $d$. If $d\in\{0,1,2,3\}$, then the sets of possible obtained graphs up to isomorphism are respectively $\{L_1\}$, $\{L_2,L_5\}$, $\{L_4,L_6,L_8\}$, and $\{L_7,L_9\}$. We see that each 1-vertex extension of $F_1$ is in $\C L$ except when $d=4$ (that is, when $x$ is adjacent to every vertex of $F_1$). This means that for every copy of $F_1$, say on $A\subseteq V$, the set $A$ is complete
to $V\setminus A$ in~$G$. It follows that every two distinct induced copies of $F_1$ are vertex-disjoint and thus $G$ has at most $n/4$ such copies. This is at most $f{n\choose 4}$, so $G$ has no copy of $F_1$ at all.\end{proof}

\begin{claim} $G$ contains no induced $F_2$ (which is the matching with two edges).\end{claim}
\begin{proof} If we extend a copy of $F_2$ by adding a vertex $x$ of degree $d\in\{0,1,2,3\}$, then we obtain graphs in respectively $\{L_5\}$, $\{L_8\}$, $\{L_{11},L_{14}\}$ and $\{L_{15}\}$. Thus the only extension that does not lead to a graph in $\C L$ is to connect $x$ to every vertex of~$F_2$. This gives that every two distinct induced copies of $F_2$ in $G$ are vertex-disjoint. Thus we have at most $O(n) \le f{n\choose 4}$ copies of $F_2$, that is, none at all.\end{proof}

Consider the edgeless 4-vertex graph $F_0$. If we add a vertex $x$ of degree $d\in\{1,2,3\}$, then we get respectively $L_1$, $L_2$ and $L_3$. The only remaining ways are to have $x$ empty or complete to $F_0$. Now, consider any copy of $F_0$ in $G$, say with vertex set $A_0\subseteq V(G)$. By above, every vertex outside of $A_0$ is empty or complete to~$A_0$. Let $A\supseteq A_0$ consist of all vertices of $G$ that send no edges to $A_0$. Note that $A$ is an independent set: if we had an edge $xy$ inside $A$ then $x$, $y$ plus some two extra vertices from $A_0$ would span a copy of $F_1$ in $G$, contradicting Claim~\ref{cl:NoF1}. Moreover, $A$ is complete to $V\setminus A$. Indeed, for every pair $(a,b)\in A\times (V\setminus A)$, the subgraph of $G$ induced by $a$ and some further three vertices of $A_0$  has no edges; thus the vertex $b\not\in A$ must be complete to it.

It follows that we can find disjoint independent sets $A_i$, $i\in I$, in $V$ such that each $A_i$ 
%has at least 4 vertices and 
is complete to $V\setminus A_i$ while every copy of $F_0$ in $G$ is inside one of these sets~$A_i$. 
Define $B=V\setminus(\cup_{i\in I} A_i)$.

By the definition of $B$ and the above claims, we have that $H=G[B]$ is $\{F_0,F_1,F_2\}$-free. This means that the complement $\O H$ of $H$ cannot have a (not necessarily induced) 4-cycle $C_4$ because for any way of filling its diagonals we get $F_0$, $F_1$ or $F_2$ in $H$. Thus $|E(\O H)|$ is at most the Tur\'an function $\operatorname{ex}(n,C_4)=O(n^{3/2})$, that is, $H$ is $O(n^{3/2})$-close in the edit distance to being a complete graph. We see that $G$ is $O(n^{3/2})$-close to the complete partite graph $G'$ with parts $A_i$, $i\in I$, and $\{x\}$, $x\in B$. As  every co-cherry in $G$ has to contain at least one pair where $E(G)$ and $E(G')$ differ, $G$ has at most $O(n^{5/2})$ co-cherries. 

Since the original graph $G_m$ and $G$ differ in $o(n^2)$ adjacencies, the co-cherry density in $G_m$ is $o(1)$, as required.\end{proof}

Thus, 
%by Lemma~\ref{lm:cocherry} (and the induced removal lemma), it 
another application of the induced removal lemma gives that we can change $o(1)$-fraction of adjacencies in $G_m$ and make it $\O{P_2}$-free, that is, complete partite.
% (and violating the requirements of~\eqref{eq:Gm} by at most factor $1+o(1)$).
Thus, in order to finish the proof of Theorem \ref{th:stab}, it is enough to argue that each of the $k$ largest parts of $G_m$ has $(\frac1k+o(1))|G_m|$ vertices. We present two proofs of this. The first proof is more direct but longer. The second one is shorter but assumes some known facts about graphons.

\subsection{First proof}

We need the following auxiliary result.

\begin{lemma}\label{lem:tech}
Suppose a graph $J$ on $n$ vertices has a subgraph $X$ such that 
\begin{enumerate}[(i)]
\item \label{cond1} $X$ has $x$ vertices where $\eps' n \le x \le (1-\eps')n$ and edge density $q \le \frac{1}{2}$
\item \label{cond2}$X$ is complete to $V(J)\setminus X$
\item \label{cond3}$X$ contains at least $\frac 12 x^4 q^3 + \eps' x^4$ copies of $P_4$.
\end{enumerate}
Then there exists a graph $J'$ on $n$ vertices with asymptotically the same edge density as $J$ and $$d_{C_5}(J') \le d_{C_5}(J) - \frac 12 (\eps')^6.$$
\end{lemma}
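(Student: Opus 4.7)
The plan is to construct $J'$ by modifying only the edges inside $X$, replacing $J[X]$ with a carefully chosen bipartite graph $X'$ of essentially the same density. Since condition~(ii) fixes the join between $X$ and $V(J)\setminus X$ to be complete (and we will preserve this together with all edges outside $X$), every $5$-cycle whose vertex set intersects $X$ in at most one vertex is counted identically in $J$ and in $J'$. The crucial observation is that each $P_4$ inside $X$, combined with any vertex $y\in V(J)\setminus X$, closes to a $5$-cycle (as $y$ is adjacent to both endpoints of the $P_4$), so the $P_4$-excess in condition~(iii) directly yields excess $5$-cycles in $J$, which we aim to eliminate.

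Explicitly, set $s=\lfloor qx\rfloor$ and $t=\lfloor x/(2s)\rfloor$, and let $X'$ be the disjoint union of $t$ copies of $K_{s,s}$ together with $x-2ts$ isolated vertices, placed on $V(X)$ by an arbitrary bijection. Condition~(i) ensures $s\le x/2$, which is precisely what is needed for this construction. A direct computation gives $e(X')=(1+o(1))\,q\binom{x}{2}$, $\nu_{C_5}(X')=0$, and
\[
P_3(X')=(1+o(1))\tfrac{1}{2}q^2 x^3,\qquad P_4(X')=(1+o(1))\tfrac{1}{2}q^3 x^4.
\]
These match, up to $o$-terms, the Sidorenko-type lower bounds valid for any graph on $x$ vertices of density $q$: $P_3(X)\ge \tfrac12 q^2 x^3 - o(x^3)$ (Cauchy--Schwarz on the degree sequence) and $P_4(X)\ge \tfrac12 q^3 x^4$ (Sidorenko for trees). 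In particular, by hypothesis~(iii),
\[
P_4(X)-P_4(X')\ge \eps' x^4-o(x^4),\qquad P_3(X)-P_3(X')\ge -o(x^3).
\]
Define $J'$ to agree with $J$ on $V(J)\setminus X$ and on all $X$-to-$(V(J)\setminus X)$ pairs, and to satisfy $J'[X]=X'$. Then $|e(J)-e(J')|=|e(X)-e(X')|=O(x)=o(n^2)$, so $J'$ has the same asymptotic edge density as $J$.

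Partition the $5$-cycles of $J$ and of $J'$ by $k=|V(C_5)\cap X|$. The contributions with $k\in\{0,1\}$ are identical. For $k\in\{2,3\}$, a case split according to whether the $X$-vertices occupy consecutive positions on the cycle expresses the contribution in terms of $e(X)$, $P_3(X)$, and fixed counts involving only $V(J)\setminus X$; the changes from $J$ to $J'$ are bounded by $|e(X)-e(X')|\cdot O(n^3)=o(n^5)$ and $|P_3(X)-P_3(X')|\cdot O(n^2)=o(n^5)$. For $k=4$, each $5$-cycle is determined uniquely by its outside vertex $y\in V(J)\setminus X$ together with a $P_4$ inside $X$ whose endpoints become the two $J$-neighbors of $y$ on the cycle, so the total contribution is $(n-x)P_4(X)$ in $J$ and $(n-x)P_4(X')$ in $J'$. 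For $k=5$ the contribution is $\nu_{C_5}(X)\ge 0$ in $J$ and $0$ in $J'$. Summing,
\[
\nu_{C_5}(J)-\nu_{C_5}(J')\ge (n-x)\bigl(P_4(X)-P_4(X')\bigr)+\nu_{C_5}(X)-o(n^5)\ge (\eps')^6 n^5-o(n^5),
\]
using $n-x\ge\eps' n$ and $x\ge\eps' n$. For $n$ large this exceeds $\tfrac12(\eps')^6 n^5$, giving the desired $d_{C_5}(J')\le d_{C_5}(J)-\tfrac12(\eps')^6$.

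The main obstacle is that the naive choice of $X'$ as a quasi-random graph of density $q$ would introduce $\approx \tfrac{1}{10}q^5 x^5$ new $5$-cycles inside $X'$ itself; for small $\eps'$ this is of strictly larger order than the $P_4$-gain $(\eps')^6 n^5$ and the argument would fail. Choosing $X'$ bipartite eliminates this loss entirely since $\nu_{C_5}(X')=0$, while the union-of-balanced-bicliques structure still attains the Sidorenko minima for $P_3$ and $P_4$, so no additional loss arises from the $k=3$ or $k=4$ sub-cases.
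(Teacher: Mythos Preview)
Your overall strategy is the same as the paper's: replace $J[X]$ by a bipartite graph $X'$ of matching density and compare $\nu_H(X)$ with $\nu_H(X')$ for each induced subgraph $H\subseteq C_5$, using convexity for $P_3$ and hypothesis~(iii) for the $P_4$ gain. The gap is in your specific construction of $X'$. With $s=\lfloor qx\rfloor$ and $t=\lfloor x/(2s)\rfloor$, the quantity $t\approx 1/(2q)$ is a \emph{bounded} integer, so taking the floor costs essentially a full copy of $K_{s,s}$ whenever $1/(2q)\notin\mathbb Z$: e.g.\ for $q=0.3$ you get $t=1$ and $e(X')\approx 0.09\,x^2$, while $e(X)\approx 0.15\,x^2$. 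Hence $|e(X)-e(X')|=\Theta(x^2)$, not $O(x)$ as you claim, so $J'$ does \emph{not} have asymptotically the same edge density as $J$; the same $\Theta(x^2)$ discrepancy also destroys your $o(n^5)$ bounds on the $k\in\{2,3\}$ contributions. More structurally: a disjoint union of $t$ equal copies of $K_{s,s}$ on $\sim 2ts$ vertices has edge density $\sim 1/(2t)$ independently of $s$, so matching an arbitrary constant $q\le 1/2$ forces $t=\lfloor 1/(2q)\rfloor$ and the rounding error cannot be made $o(x^2)$.

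The remedy is simply to pick $X'$ differently. The paper takes $X'$ to be a \emph{random} balanced bipartite graph on two parts of size $x/2$ with edge probability $2q$; w.h.p.\ $X'$ is then nearly $qx$-regular, which gives $e(X')=(1+o(1))q\binom{x}{2}$, $\nu_{P_3}(X')=(1+o(1))\tfrac12 q^2x^3$, $\nu_{P_4}(X')=(1+o(1))\tfrac12 q^3x^4$, and of course $\nu_{C_5}(X')=0$. Any nearly-$qx$-regular bipartite graph on $x$ vertices would serve equally well if you prefer a deterministic choice. With this corrected $X'$, the remainder of your argument (the partition by $k=|V(C_5)\cap X|$, the convexity bound on $P_3(X)$, and the use of (iii) for $P_4$) goes through and coincides with the paper's proof.
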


\begin{proof}
Note first that conditions \eqref{cond1} and \eqref{cond2} imply that $J$ is dense since it has at least $\eps' (1-\eps')n^2$ edges. We make $J'$ by replacing $X$ with a $X'$, which is a random balanced bipartite graph with edge probability $2q$. We will not change the rest of the graph, so $J'-X'=J-X$. W.h.p.  $X'$ has edge density asymptotically $q$ and so $J'$ has asymptotically the same edge density as $J$. We will argue that $J'$ has much fewer copies of $C_5$ than $J$ has, by considering several possible types of $C_5$ copies. 

We will compare the copies according to how they intersect $X$ (for counting copies of $C_5$ in the graph $J$) or $X'$ (in $J'$). Specifically, since $X$  is complete to the rest of  $J$   we have
\[
\nu_{C_5}(J) = \sum_H m_H \nu_{H}(X)\cdot \nu_{C_5-H}(J - X)
\]
where the sum is over all induced subgraphs $H \subseteq C_5$, and the coefficient $m_H$ is the number of $C_5$ copies contained in the graph formed by taking a copy of $H$ and a copy of $C_5-H$ with every possible edge in between. Recall that $\nu_H(G)$ counts the number of (not necessarily induced) copies of $H$ in $G$. Similarly,  we have
\[
\nu_{C_5}(J')= \sum_H m_H \nu_{H}(X')\cdot \nu_{C_5-H}(J'-X') =\sum_H m_H \nu_{H}(X')\cdot \nu_{C_5-H}(J-X),
\]
since $J'-X' = J-X$. So we will compare $\nu_{H}(X)$ with $\nu_{H}(X')$ for each $H$. Specifically we will show that $\nu_{H}(X') \le (1+o(1))\nu_{H}(X)$ for each $H$, and that this inequality holds with some room for $H=P_4$.

Some easy cases: when $H$ has no vertices, $\nu_{H}(X) = \nu_{H}(X') = 1$. 
When $H$ is a single vertex, $\nu_{H}(X) = \nu_{H}(X') = x$. When $H$ is just an edge, $\nu_{H}(X) = (1+o(1))\nu_{H}(X') = (1+o(1))\binom{x}{2}q$. When $H$ has 2 vertices and no edge we have $\nu_{H}(X') = \nu_{H}(X) = \binom{x}{2}$.  When $H$ is the graph on 3 vertices consisting of an edge and an isolated vertex, we have $\nu_{H}(X') = (1+o(1))\nu_{H}(X) = (1+o(1))x\binom{x}{2}q$.

When $H=P_3$ (the path of length 2) we have 
\[
\nu_{P_3}(X')= 2\binom{\frac x2}{2}\frac x2(2q)^2 = (1+o(1))\frac12 x^3 q^2
\]
 which we compare to 
 \[
\nu_{P_3}(X)= \sum_{v \in X} \binom{|N(v) \cap X|}{2} \ge x \cdot \binom{\frac{2q \binom{x}{2}}{x}}{2}=(1+o(1))\frac12 x^3 q^2.
 \]

Finally we consider the case $H=P_4$. We have
\[
\nu_{P_4}(X')=2\binom{\frac x2}{2} \cdot 2\binom{\frac x2}{2} (2q)^3 = (1+o(1)) \frac 1{2} x^4 q^3
\]
which we compare to
 \[
\nu_{P_4}(X)= \frac 12 x^4 q^3 + \eps' x^4.
 \]

Taking all possible $H$ into account, we see that 
\begin{align*}
\nu_{C_5}(J)- \nu_{C_5}(J') &=  \sum_H \sbrac{ \nu_{H}(X) - \nu_{H}(X')}\cdot \nu_{C_5-H}(J - X) \\
& \ge \sbrac{\nu_{P_4}(X) - \nu_{P_4}(X')}\cdot\nu_{C_5-P_4}(J - X) \\
& \ge (1+o(1)) \eps' x^4 \cdot (n-x) \\
& >\frac 12 (\eps')^6 n^5 
\end{align*}
and so $$d_{C_5}(J') \le d_{C_5}(J) - \frac 12 (\eps')^6.$$
\end{proof}

\begin{proof}[Proof of Theorem \ref{th:stab}] Let $G_m$ be as in~\eqref{eq:Gm}. Let $m\to\infty$.
By the induced graph removal lemma and Lemma~\ref{lm:NoCocherries}
% (see, e.g.,~\cite{Alon2000,ConFox2013}) 
we can eliminate all co-cherries in the graph $G=G_m$ of order $n\to\infty$ by adding or removing at most $\a n^2$ edges, for some $\a = \a(\eps) \rightarrow 0$  as $\eps\rightarrow 0$. Call this new graph $G'$, which has edge density $p'$, where $p - 2\a \leq p' \leq p+2\a$.  Moreover, $G'$ is a complete $k'$-partite graph for some~$k'$. Say the parts of $G'$ are $X_1, \ldots,X_{k'}$.
Also, note that since adding (or removing) one edge to $G$ creates (or destroys) at most $n^3$ copies of $C_5$, we have
\[
d_{C_5} (G) = d_{C_5} (G') + O(\a),
\] 
and 
\[
d_{C_5}(p)  =  d_{C_5}(p')+O(\a)
\]
(recall that we use big-O notation to replace quantities that are bounded in absolute value, and the quantity being replaced may be negative). Now 
\begin{equation}\label{eq:dupper}
d_{C_5}(G') \le d_{C_5}(G) + O(\a) \le d_{C_5}(p)+ \eps  + O(\a) \le d_{C_5}(p')+  O(\eps + \a)
\end{equation}
and so $G'$ has nearly the minimum  $C_5$-density among graphs with edge density $p'$. 

In the following, we will need  a parameter $\b = \b(\eps) = (\eps+\a(\eps))^{1/100}$. 

\begin{claim}
We are done unless we have the following. For any $i \neq j$, $|X_i|+|X_j| \le (1-\b)n$.
\end{claim}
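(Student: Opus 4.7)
The plan is to derive a direct contradiction from the supposition that $|X_i|+|X_j|>(1-\b)n$ for some $i\neq j$, using only the edge density of $G'$ together with the assumption $k\ge 3$. The more delicate Lemma~\ref{lem:tech} is not needed for this particular step; it will only enter later when the sizes of the remaining parts have to be made genuinely balanced rather than merely bounded away from $n$.

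Concretely, set $x_\ell=|X_\ell|$. Since $G'$ is the complete $k'$-partite graph on $X_1,\dots,X_{k'}$, the number of its edges equals
\[
 \binom{n}{2}-\sum_\ell \binom{x_\ell}{2} \;=\; \tfrac12\bigl(n^2-\textstyle\sum_\ell x_\ell^2\bigr).
\]
Suppose for contradiction that $x_i+x_j>(1-\b)n$ for some pair $i\neq j$. The elementary bound $x_i^2+x_j^2\ge (x_i+x_j)^2/2$ yields $\sum_\ell x_\ell^2 \ge x_i^2+x_j^2 > (1-\b)^2 n^2/2$, so the edge density of $G'$ satisfies
\[
 p' \;\le\; 1-\tfrac{(1-\b)^2}{2}+O(1/n) \;=\; \tfrac12+\b-\tfrac{\b^2}{2}+O(1/n).
\]

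On the other hand, it has already been observed that $p'\ge p-2\a = 1-\tfrac1k - 2\a \ge \tfrac23-2\a$ since $k\ge 3$. Combining the two bounds gives
\[
 \tfrac16 \;\le\; \tfrac12-\tfrac1k \;\le\; \b+2\a+O(\b^2+1/n).
\]
Since $\a=\a(\eps)\to 0$ and $\b=(\eps+\a(\eps))^{1/100}\to 0$ as $\eps\to 0$, for $\eps$ sufficiently small and $n$ correspondingly large the right-hand side is strictly below $1/6$, producing the desired contradiction. Hence no two parts of $G'$ can together cover more than $(1-\b)n$ vertices, which is exactly what the claim asserts.

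The argument presents no real obstacle; the only point to verify is the strict density gap $p-1/2\ge 1/6$ available for $k\ge 3$, after which the estimate is a one-line convexity bound and the rest is bookkeeping. In essence, the claim reflects the basic fact that a complete multipartite graph whose edge density is comfortably above $1/2$ cannot concentrate almost all of its vertices in just two parts.
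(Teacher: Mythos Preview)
Your argument is correct. Both you and the paper bound the edge density of $G'$ by $\tfrac12+O(\b)$ when two parts together contain more than $(1-\b)n$ vertices; from there the paper deduces that the only way to match $p'\approx 1-1/k$ is to have $k=2$, and then spends an additional paragraph showing that in the $k=2$ case the two big parts must be nearly balanced, so $G'$ is close to $T_2^n$. You instead invoke the standing hypothesis $k\ge 3$ of Theorem~\ref{th:stab} to get $p'\ge \tfrac23-2\a$, which immediately contradicts the density bound. This is a legitimate shortcut: since the theorem is stated only for $k\ge 3$, the case analysis for $k=2$ in the paper is extra (the authors remark in the introduction that the $k=2$ case of the theorems follows from standard results anyway, and the residual $k=2$ treatment here appears to be a vestige of that). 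Your route is shorter and avoids the $\b^{1/3}$ balancing step; the paper's route has the minor advantage of being self-contained for all $k\ge 2$.
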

\begin{proof}
%We claim that we are done if there exist $i \neq j$ with $|X_i|+|X_j| \ge (1-\b)n$. 
Without loss of generality, suppose for contradiction that  $|X_1| + |X_2| \ge (1-\b)n$, so the number of edges in $G'$ is at most 
\[
\binom{n}{2} - \binom{|X_1|}{2} - \binom{|X_2|}{2} \le \binom{n}{2}- 2\binom{\frac{(1-\b)n}{2}}{2} \le \frac12 n^2 - \frac14 (1-\b)^2 n^2= \rbrac{\frac14 +O(\b)} n^2
\]
 and so we must have $k=2$ since throughout the proof we assume $\eps$ (and therefore $\a$ and $\b$) are sufficiently small. Now if $||X_1| - |X_2|| >\b^{1/3} n$, say without loss of generality $|X_1| > |X_2| + \b^{1/3} n$ then the number of edges in $G'$ is at most 
 \begin{align*}
|X_1||X_2| + \b n (|X_1| + |X_2|) + \binom{\b n }{2} &\le \rbrac{\frac n2 +  \frac12 \b^{1/3} n}\rbrac{\frac n2 -  \frac12 \b^{1/3}  n} + \b n^2 + \binom{\b n }{2}\\
& = \rbrac{\frac14 -\frac14 \b^{2/3} + O(\b)}n^2,
\end{align*}
 which is a contradiction for small $\eps$ since $G'$ has at least $\binom{n}{2}p- \a n^2$ edges (where $p=\frac12$ since $k=2$) and $\frac 14 \b^{2/3} +O(\b) > \a$ for small $\eps$. To summarize, $G'$ is a complete partite graph that has two large parts $X_1, X_2$ which differ in size by at most $\b^{1/3} n$, and together the rest of the parts make up at most $\b n$ vertices. It is easy to see then that $G'$ can be changed into a balanced complete bipartite graph by editing $O(\b^{1/3} n^2)$ edges.  
\end{proof}
Thus, we henceforth assume that for any $i \neq j$, $|X_i|+|X_j| \le (1-\b)n$.

\begin{claim} \label{claim:balanced}
For all $i, j$, if $|X_i|, |X_j| \ge \b n$, then $||X_i| - |X_j|| \le \b n$.
\end{claim}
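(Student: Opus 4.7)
The plan is to argue by contradiction: if two large parts are imbalanced, I will apply Lemma~\ref{lem:tech} to their union $X$ to produce a graph $J'$ on $n$ vertices with essentially the same edge density as $G'$ but with noticeably fewer copies of $C_5$, contradicting the near-optimality recorded in \eqref{eq:dupper}.

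Concretely, suppose for contradiction that there are indices $i\neq j$ with $|X_i|, |X_j| \ge \b n$ and, after relabelling, $a := |X_i| > b + \b n$ where $b := |X_j|$. Let $X = X_i \cup X_j$, so that $G'[X]$ is the complete bipartite graph $K_{a,b}$. Condition~(ii) of Lemma~\ref{lem:tech} ($X$ complete to $V(G')\setminus X$) is automatic because $G'$ is complete multipartite, and condition~(i) holds with parameter $\b$ since $x := a+b$ satisfies $2\b n \le x \le (1-\b)n$ (the upper bound coming from the previous claim). The heart of the matter is condition~(iii). The edge density of $K_{a,b}$ is $q = \tfrac{2ab}{x(x-1)} \le \tfrac12$, and the number of copies of $P_4$ in $K_{a,b}$ equals $a(a-1)b(b-1) = (1+o(1))a^2 b^2$. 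Using the identity $x^2 - 4ab = (a-b)^2$,
\[
a^2 b^2 - \tfrac12 q^3 x^4 \;=\; (1+o(1))\,\tfrac{q^2 (1-2q)}{4}\,x^4.
\]
The hypotheses $a,b \ge \b n$ give $q \ge 2\b^2$, while $a-b \ge \b n$ gives $1-2q \ge \b^2 - o(1)$; hence $q^2(1-2q)/4 \ge \tfrac12 \b^6 - o(1)$, so condition~(iii) holds with, say, $\eps' := \tfrac14 \b^6$.

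Lemma~\ref{lem:tech} now delivers a graph $J'$ on $n$ vertices with edge density $p' + o(1)$ and $d_{C_5}(J') \le d_{C_5}(G') - c_0 \b^{36}$ for an absolute constant $c_0 > 0$. To turn this into a contradiction I would compare $J'$ against the bound of Theorem~\ref{thm:main} at the edge density $p$. Since $|p(J') - p| = O(\eps + \a)$, one can add or delete $O((\eps + \a)n^2)$ edges in $J'$ to force its edge density to equal $p$ exactly; each edge change affects $d_{C_5}$ by $O(n^{-2})$, so this operation alters $d_{C_5}$ by at most $O(\eps+\a)$. Theorem~\ref{thm:main} applied to the adjusted graph then yields $d_{C_5}(J') \ge \lambda - O(\eps + \a)$. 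Combined with $d_{C_5}(G') \le \lambda + O(\eps + \a)$ from \eqref{eq:dupper}, this gives $c_0 \b^{36} \le O(\eps + \a)$. Since $\b = (\eps + \a)^{1/100}$, the resulting inequality $(\eps+\a)^{36/100} \le O(\eps + \a)$ fails for all sufficiently small $\eps$, producing the desired contradiction.

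The only delicate point is the asymptotic bookkeeping: one must confirm that the $P_4$-surplus of order $\b^6 x^4$ inside $X$ survives the $o(1)$ losses from Lemma~\ref{lem:tech}, from the initial removal step, and from the edge-density adjustment, and still dominates the $O(\eps+\a)$ slack in \eqref{eq:dupper}. The very conservative exponent $1/100$ in the definition of $\b$ was chosen precisely to make this comparison routine, so no subtle estimate is required anywhere.
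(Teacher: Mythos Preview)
Your proof is correct and follows essentially the same route as the paper: set $X=X_i\cup X_j$, verify the hypotheses of Lemma~\ref{lem:tech} (using the previous claim for the upper bound $x\le(1-\b)n$), compute the $P_4$-surplus in $K_{a,b}$, and extract a contradiction with~\eqref{eq:dupper} via $\b=(\eps+\a)^{1/100}$. The only cosmetic differences are that the paper bounds $\nu_{P_4}(X)-\tfrac12 q^3x^4$ directly as $\tfrac12\,\tfrac{x_1^2x_2^2}{x^2}(x_1-x_2)^2$ (rather than rewriting it as $\tfrac{q^2(1-2q)}{4}x^4$), takes $\eps'=\tfrac12\b^6$ instead of $\tfrac14\b^6$, and compares the output graph $J'$ to $d_{C_5}(p')$ rather than adjusting its density to $p$ and invoking Theorem~\ref{thm:main}; none of this changes the argument.
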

\begin{proof}
Suppose for contradiction that there are two parts (without loss of generality say $X_1, X_2$) such that $|X_1|, |X_2| \ge \b n$ and $||X_1| - |X_2|| > \b n$. 
We will derive a contradiction by arguing that $G'$ can be modified by Lemma \ref{lem:tech} to form another graph $G^*$ of asymptotically the same edge density but with significantly smaller $C_5$-density than $G'$. 
%We will derive a contradiction by arguing that $G'$ must have close to the minimum $C_5$-density among graphs with the same edge density, and then finding another graph $G^*$ of the right edge density but with significantly smaller $C_5$-density than $G'$. 
%Indeed, the edge density $p'$ of $G'$  has $p-2\a \le p' \le p+2\a$. Also, note that since adding one edge from a graph   creates at most $n^3$ copies of $C_5$ and cannot destroy any copies, we have
% \[
%d_{C_5}(p)  \le  d_{C_5}(p')+O(\a).
% \]
%Now 
%\begin{equation}\label{eq:dupper}
%d_{C_5}(G') \le d_{C_5}(G) \le d_{C_5}(p)+ \eps   \le d_{C_5}(p')+  O(\eps + \a)
%\end{equation}
%
%and so $G'$ has nearly the minimum  $C_5$-density among graphs with edge density $p'$. 

We apply Lemma \ref{lem:tech} with $J=G'$, $X=X_1 \cup X_2$, $\eps' = \frac12 \b^6$ and $$q = \frac{x_1 x_2}{\binom{x}{2}} = (1+o(1))\frac{2 x_1 x_2}{x^2} $$ where $|X_i|=x_i$ and $x=x_1+x_2$. 
Let us check the conditions of the lemma. Clearly we have 
\[
\b n \le x \le (1-\b)n,
\]
and $X$ is complete to the rest of the graph (since $X$ is composed of two parts of a complete partite graph). Finally, the number of copies of $P_4$ in $X$ is 
\begin{align*}
\nu_{P_4}(X) = 2 \binom{x_1}{2} \cdot 2 \binom{x_2}{2} = (1+o(1))x_1^2 x_2^2
\end{align*}
which we compare to 
\begin{align*}
\frac 12 x^4 q^3 = (1+o(1)) \frac 12 x^4 \rbrac{\frac{2 x_1 x_2}{x^2}}^3= (1+o(1)) \frac{4x_1^3 x_2^3}{x^2}.
\end{align*}
From here we can see that 
\begin{align*}
\nu_{P_4}(X) - \frac 12 x^4 q^3 & \ge (1+o(1))\rbrac{x_1^2 x_2^2 - \frac{4x_1^3 x_2^3}{x^2}} \\
& \ge \frac12 \cdot \frac{x_1^2x_2^2}{x^2} (x^2 - 4x_1x_2) \\
& = \frac12 \cdot \frac{x_1^2x_2^2}{x^2} (x_1-x_2)^2 \\
&\ge \frac12 \frac{(\b n)^4}{n^2} (\b n)^2 = \frac12 \b^6 n^4 \ge \frac12 \b^6 x^4
\end{align*}
and so Lemma \ref{lem:tech} applies, implying that $J=G'$ must have $C_5$-density at least $$d_{C_5}(p') + \frac12 \rbrac{\frac12 \b^6}^6 = d_{C_5}(p') + \frac1{128} \b^{36}.$$ But then from \eqref{eq:dupper}, we have 
\[
d_{C_5}(p') + \frac1{128} \b^{36} \le d_{C_5}(G') \le d_{C_5}(p')+  O(\eps + \a),
\]
a contradiction for small $\eps$ since $\b =  (\eps+\a)^{1/100}$.
\end{proof} 

Without loss of generality say that $|X_1|, \ldots, |X_\ell| \ge \b n$ and $|X_i| < \b n$ for any $i>\ell$. By Claim~\ref{claim:balanced}, there is some value $x$ such that $|X_i| \in [(x - \b) n, (x + \b) n]$ for $1 \le i \le \ell$. Then the number of edges in $G'$ is at most
\begin{align*}
  \binom{n}{2} - \sum \binom{|X_i|}{2} &\le \binom{n}{2} - \ell \binom{(x-\b) n}{2}\\
  & = \frac12 n^2 (1-\ell x^2 + O\rbrac{ \b}).
 \end{align*}
We will now show a lower bound matching the above upper bound. Since for any numbers $a\ge b$ and $\delta>0$, we have $(a+\delta)^2 + (b-\delta)^2 > a^2+b^2$ the following holds. Since $\sum_{i>\ell} |X_i| \le n$, and for $i > \ell$ we have $|X_i| \le \b n$, the maximum possible value of $\sum_{i>\ell} |X_i|^2$ occurs when all the terms are either $0$ or $(\b n)^2$, meaning that the number of positive terms would be at most $\frac{1}{\b }$, so we have 
\[
\sum_{i>\ell} |X_i|^2 \le \frac{1}{\b } \cdot (\b n)^2 = \b n^2
\]
 the number of edges in $G'$ is then at least
\begin{align*}
  \binom{n}{2} - \sum \binom{|X_i|}{2} &\ge \binom{n}{2} - \ell \binom{(x+\b) n}{2} -\frac12 \b n^2\\
    & = \frac12 n^2 (1-\ell x^2 +O(\b) ).
 \end{align*}
But we know $G'$ has edge density $p' = 1 - \frac1k + O(\a)=1-\ell x^2 +O(\b)$ and so we get 
\[
x=\frac{1}{\sqrt{k\ell}} + O(\b)
\]
and in particular $\ell \le k$ since otherwise $|X_1| + \ldots + |X_\ell| \ge (\ell x + O(\b))n > n$. To summarize, at this point we know that the graph must have $\ell \le k$ ``large" parts which each have about $\frac{1}{\sqrt{k\ell}} n$ vertices, and the rest of the parts are ``small" and each have at most $\b n$ vertices. We would like to show that $\ell = k$, so assume for contradiction that $\ell < k$. 
%\pb{Now the graph must resemble a complete partite graph with $\ell$ parts of size $\frac{1}{\sqrt{k\ell}}n$ and the rest of the parts very small, heuristically we could say the other parts all have only 1 vertex. It should be easy to count $C_5$ in such a graph and show it's too much if $\ell < k$. Then we know $\ell = k$ and we're basically done. }
\begin{claim}
$\sum_{i > \ell} |X_i| > \b n$.
\end{claim}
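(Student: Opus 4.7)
The plan is to read off the conclusion from the size estimate on the large parts that the preceding paragraph has already established. From that paragraph we have $|X_i| = xn + O(\beta n)$ for every $i \le \ell$, with $x = 1/\sqrt{k\ell} + O(\beta)$. Summing over the $\ell$ large parts gives
\[
\sum_{i \le \ell}|X_i| = \ell x n + O(\beta n) = \sqrt{\ell/k}\, n + O(\beta n),
\]
and subtracting from $n$ (which equals the sum of \emph{all} the $|X_i|$) yields
\[
\sum_{i > \ell}|X_i| = \bigl(1 - \sqrt{\ell/k}\bigr)\, n + O(\beta n).
\]

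Now I would invoke the contradictory assumption $\ell < k$, i.e.\ $\ell \le k-1$, which forces
\[
1 - \sqrt{\ell/k} \;\ge\; 1 - \sqrt{(k-1)/k},
\]
a strictly positive constant depending only on the fixed integer $k \ge 3$. Since $k$ is fixed while $\beta = (\eps+\alpha(\eps))^{1/100} \to 0$ as $\eps \to 0$, once $\eps$ is small enough the $O(\beta n)$ error term is dwarfed by this constant multiple of $n$, and a fortiori the right-hand side is larger than $\beta n$. This gives $\sum_{i > \ell}|X_i| > \beta n$, as required.

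I do not anticipate any real obstacle here: the claim is essentially bookkeeping, and every ingredient (the almost-equal sizes of the large parts, the value of $x$, and the upper bound $\ell \le k$) is already in place. The only point requiring a sliver of care is to note that the implicit constant in $O(\beta n)$ depends only on $k$, so that the comparison with the fixed positive gap $1 - \sqrt{(k-1)/k}$ is uniform in all other parameters once $\eps$ is taken sufficiently small.
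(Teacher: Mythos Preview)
Your proof is correct and follows essentially the same approach as the paper: both compute $\sum_{i>\ell}|X_i| = n - \sum_{i\le\ell}|X_i| = \bigl(1 - \sqrt{\ell/k} + O(\beta)\bigr)n$ using the established value $x = 1/\sqrt{k\ell} + O(\beta)$, and then invoke $\ell < k$ together with the smallness of $\beta$ to conclude. Your version is slightly more explicit about the uniform lower bound $1 - \sqrt{(k-1)/k}$, but the argument is the same.
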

\begin{proof}
Observe that 
\[
\sum_{i > \ell} |X_i| = n-\sum_{i \le \ell} |X_i|= n-\ell\rbrac{\frac{1}{\sqrt{k\ell}} + O(\b)}n = \rbrac{1-\frac{\sqrt{\ell}}{\sqrt{k}}+O(\b)}n > \b n
\]
since $\ell < k$ and we may assume $\b>0$ is arbitrarily small. 
\end{proof}

%\bl{The following needs some edits...}

Now we will use Lemma~\ref{lem:tech}  on $J=G'$ and $X$ being $X_1$ together with several of the small $X_i$s, which will finish the proof.
Recall we have $|X_1|$ of size  $\left( \frac{1}{\sqrt{kl}} + O(\beta) \right)n$. 
We know $|X_i| < \beta n$ for all $i > \ell$ and at the same time $| \cup_{i > \ell} X_i | > \beta n$. 
Hence there exists an integer $z$ such that $\beta n \leq  | \cup_{z \geq i > \ell} X_i | \leq 2\beta n$. 
Let $Y = \cup_{z \geq i > \ell} X_i$. 
In order to apply Lemma~\ref{lem:tech} to $X=X_1 \cup Y$, we need to count the number of copies of $P_4$ in $X$,
the other assumptions of  Lemma~\ref{lem:tech} are clearly satisfied.
Notice that $\nu_{P_4}(X)$ is bounded from below by the number of copies of $P_4$ that alternate vertices in $X_1$ and in $Y$,
which gives
\begin{equation}\label{eq:nuP4}
\nu_{P_4}(X) \geq |X_1|^2 |Y|^2 \geq |X_1|^2 (\beta n)^2 = \frac{\beta^2}{kl} n^4 + O(\beta^3)n^4.
\end{equation}

Denote $|X|$ by $x$.
Notice that
\[
x = |X_1| + |Y| =  \left(\frac{1}{\sqrt{kl}} + O(\beta)\right) n.
\]
Let $e$ be the number of edges in $X$. It can be bounded from above by pretending that $Y$ is a complete graph,
which gives
\[
e \leq |X_1| \cdot |Y| + |Y|^2/2 \leq 	 \frac{2\beta n^2}{\sqrt{kl}} + O(\beta^2)n^2.
\]
This gives 
\[
q = \frac{2e}{x^2} \leq  4\beta \sqrt{kl} + O(\beta^2).
\]
Hence $X$ satisfies of Lemma~\ref{lem:tech}\textit{(iii)} with $\eps' = \frac{\beta^2kl}{2}$, since
\[
\frac{1}{2}x^4 q^3 \leq \frac{32\beta^3}{\sqrt{kl}}n^4 + O(\beta^4)n^4
\]
is significantly smaller than $\nu_{P_4}(X)$ (see \eqref{eq:nuP4}) and 
$\eps' x^4 \leq \frac{\beta^2}{2kl}n^4 + O(\beta^4)n^4$ is about $\frac12 \nu_{P_4}(X)$.
Hence Lemma~\ref{lem:tech} implies
\[
d_{C_5}(G') \geq  d_{C_5}(p') +  \frac{\beta^{12}(kl)^6}{2^7} >  d_{C_5}(p') + \beta^{19}. 
\]
Combining this with \eqref{eq:dupper} gives the final contradiction
\[
d_{C_5}(p') + \beta^{19}  \le d_{C_5}(G') \le d_{C_5}(p')+  O(\eps + \a)
\]
for a small $\eps$ since $\b =  (\eps+\a)^{1/100}$.
\end{proof}

Summarizing, we just showed that $G$ can be transformed into the Tur\'an graph $T_{n}^k$ by adding or deleting at most $o(n^2)$ edges.

\subsection{Second proof}

Here we use some notions related to graphons. An introduction to graphons and further details can be found in the excellent book by Lov\'asz~\cite{Lovasz:lngl}.
In general, a \emph{graphon} is a quadruple $Q=(\Omega,\C B,\mu,W)$, where $(\Omega,\C B,\mu)$ is a standard probability space and $W:\Omega\times\Omega\to[0,1]$ is a symmetric measurable function, see~\cite[Section~13.1]{Lovasz:lngl}. For a graph $F$ on $[k]$, its \emph{induced homomorphism density} in $Q$ is 
 \begin{equation*}%\label{eq:tind}
\tind(F,Q)= \int_{\Omega^k} \prod_{ij\in E(F)} W(x_i,x_j) \prod_{ij\in E(\O F)} (1-W(x_i,x_j))\,\dd \mu(x_1)\ldots\dd \mu(x_k). 
\end{equation*}
 Here we identify two graphons $Q$ and $Q'$ if $\tind(F,Q)=\tind(F,Q')$ for every graph $F$, calling them \emph{equivalent}.
 
The relevance of graphons comes from the result of Lov\'asz and Szegedy~\cite{LovaszSzegedy06} that positive homomorphisms $\phi:Hom^+(\C A,\I R)\to\I R$ are in one-to-one correspondence with graphons $Q$ (up to equivalence) so that, for every graph $F$, we have $\phi(F)=p(F,Q)$, where we let $p(F,Q)=\frac{|F|!}{|\aut(F)|}\, \tind (F,Q)$ with
$\aut(F)$ being the group of automorphisms of $F$. Also, let 
 $$
 d_{C_5}(Q)=\frac1{5!}\sum_{F\in\C F_5} c_F^{OPT} p(F,Q).
 $$

We correspond to a graph $G=(V,E)$  the graphon $Q_G=(V,2^V,\mu,A)$ where $\mu$ is the uniform measure and $A:V\times V\to \{0,1\}$ is the adjacency function of~$G$.
% (which is 0 on the diagonal). 
Then, for example, $\tind(F,Q_G)$ is the probability that a uniform random map $f:V(F)\to V(G)$ is an \emph{induced homomorhism}, that is, for all $i,j\in V(F)$, $\{i,j\}\in E(F)$ if and only if $\{f(i),f(j)\}\in E(G)$. We say that a sequence of graphons $Q_n$ \emph{converges} to $Q$ 
%(or to the corresponding $\phi$)
if, for every graph $F$, we have $\lim_{n\to\infty} \tind(F,Q_n)=\tind(F,Q)$. In particular, if $Q_n=Q_{G_n}$ for some increasing sequence of graphs $G_n$, then this gives the same convergence of graphs that we used.

Since, by Lemma~\ref{lm:NoCocherries}, we will be seeing only the limits of (almost) complete partite graphs, the following more restrictive class $\C P$ of ``complete partite'' graphons will suffice for our purposes. 
Namely, from now on, we fix $\Omega$ to be the set $\{0,1,2,\dots\}$ of non-negative integers with the discrete topology (thus all subsets of $\Omega$ or $\Omega^2$ are measurable) and fix $W(i,j)$ to be 0 if $i=j\ge 1$ and be 1 otherwise (i.e., if $i\not=j$ or if $i=j=0$). Only the measure $\mu$ will vary, and the measures that we consider are as follows. Let
$$
 \C{R}=\left\{\rho\in [0,1]^{\I N}\mid \rho_1\ge \rho_2\ge\dots,\ \sum_{i=1}^\infty \rho_i\le 1\right\}.
 $$ 
 For $\rho=(\rho_1,\rho_2,\dots)\in\C{R}$, define the probability measure $\mu_\rho$ on $(\Omega,2^\Omega)$ by $\mu_\rho(\{i\})=\rho_i$ for $i\ge1$. Thus $\mu_\rho(\{0\})=\rho_0$, where $\rho_0$ is always a shorthand for $1-\sum_{i=1}^\infty \rho_i$ (but is not an entry of the vector $\rho=(\rho_1,\rho_2,\dots)$).
 %, and $\mu(A)=\sum_{i\in A}\rho_i$ for an arbitrary $A\subseteq\Omega$. 
 Also, define
 $$
  P_\rho=(\Omega,2^\Omega,\mu_\rho,W),\qquad\mbox{for $\rho\in\C{R}$}
  $$
  and let $\C P=\{P_\rho\mid \rho\in\C{R}\}$ consist of all graphons that arise this way.

For example, a complete partite graph $G$ gives a graphon $P_G\in\C P$ as follows. Order the parts $V_1,\dots,V_s$ of $G$ non-increasingly by their size, let $\rho_G=(|V_1|/|G|,\dots,|V_s|/|G|,0,0,\dots)$, and take $P_G=(\Omega,2^\Omega,\mu_{\rho_G},W)$. Since all vertices inside a part $V_i$ are twins in $G$, we have that $\tind(F,Q_G)=\tind(F,P_G)$ for every $F\in\C F$. Thus $Q_G$ and $P_G$ are equivalent graphons.

One should think of $P_\rho$ as the limit of complete partite graphs where, for $i\ge 1$, $\rho_i$ is the fraction of vertices in the $i$-th largest part while $\rho_0$ is the total fraction of vertices in parts of relative size~$o(1)$.

\begin{lemma}\label{lm:pointwise} If a sequence of vectors $\rho_{n}\in \C{R}$ converges to $\rho\in [0,1]^{\I N}$ in the product topology (that is, pointwise), then $\rho\in\C{R}$ and
the corresponding graphons $P_{\rho_{n}}$ converge to $P_\rho$.\end{lemma}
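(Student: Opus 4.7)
The plan is to first verify $\rho\in\C{R}$ and then reduce graphon convergence to $L^1$-convergence of canonical $[0,1]$-representations of the $P_\sigma$'s. The membership $\rho\in\C{R}$ will follow because pointwise limits of non-increasing sequences are non-increasing, and Fatou's lemma (applied to counting measure on $\I N$) gives $\sum_{i\ge 1}\rho_i\le\liminf_n\sum_{i\ge 1}\rho_{n,i}\le 1$.

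Next, for each $\sigma\in\C{R}$, I would lay out disjoint intervals $I_i(\sigma)\subseteq[0,1]$ of length $\sigma_i$ in the order $I_1(\sigma),I_2(\sigma),\dots$ starting from $0$, with $I_0(\sigma)$ as the remaining interval at the right end, and set $\phi_\sigma(x)=i$ for $x\in I_i(\sigma)$. Then $\widetilde W_\sigma(x,y):=W(\phi_\sigma(x),\phi_\sigma(y))$ is a $\{0,1\}$-valued graphon on $[0,1]^2$, and since the push-forward of Lebesgue measure under $\phi_\sigma$ equals $\mu_\sigma$, a change of variables gives $\tind(F,\widetilde W_\sigma)=\tind(F,P_\sigma)$ for every graph $F$. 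A standard telescoping bound on the product in the definition of $\tind$ then yields
\[
\left|\tind(F,\widetilde W_{\rho_n})-\tind(F,\widetilde W_\rho)\right|\le\binom{|V(F)|}{2}\,\|\widetilde W_{\rho_n}-\widetilde W_\rho\|_{L^1([0,1]^2)},
\]
so it will suffice to establish $L^1$-convergence of these canonical representations.

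To prove the $L^1$-convergence, write $\widetilde W_\sigma=1-\mathbf{1}_{A(\sigma)}$ with $A(\sigma):=\bigcup_{i\ge 1}I_i(\sigma)\times I_i(\sigma)$, so that $\|\widetilde W_{\rho_n}-\widetilde W_\rho\|_{L^1}=|A(\rho_n)\triangle A(\rho)|$. Fix a cutoff $K\in\I N$ and split each union into head indices $i\le K$ and tail indices $i>K$. Since every $\sigma\in\C{R}$ is non-increasing with $\sum_i\sigma_i\le 1$, the tail contributes $\sum_{i>K}\sigma_i^2\le\sigma_{K+1}\sum_{i>K}\sigma_i\le\sigma_{K+1}\le 1/(K+1)$, uniformly in $\sigma$. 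For the head, pointwise convergence of $\rho_{n,i}$ for each $i\le K$ forces both the length and the starting endpoint of $I_i(\rho_n)$ to converge to those of $I_i(\rho)$, so the symmetric difference of the two head sets tends to $0$ as $n\to\infty$ for each fixed $K$. Taking $K$ large first and then $n$ large produces $|A(\rho_n)\triangle A(\rho)|\to 0$, as required.

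The hard part will be that $\rho_{n,0}=1-\sum_{i\ge 1}\rho_{n,i}$ need not converge to $\rho_0$: pointwise convergence allows mass to escape through ever-thinner positive entries, as in $\rho_n=(1/n,\dots,1/n,0,\dots)$ with $n$ positive entries, where $\rho_{n,0}=0$ for every $n$ while $\rho_0=1$ in the limit. The canonical $[0,1]$-representation absorbs this asymmetry, because mass in a very small positive-index atom of $\rho_n$ contributes the value $1$ to $\widetilde W_{\rho_n}$ outside a block whose measure equals its own square, exactly as mass placed at the zero atom does; this is precisely what lets the tail bound above close the argument even though $\mu_{\rho_n}$ does not converge to $\mu_\rho$ in any stronger sense.
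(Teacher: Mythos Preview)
Your proof is correct and takes a genuinely different route from the paper's. The paper stays inside the discrete model $(\Omega,2^\Omega,\mu_\sigma,W)$: it truncates each measure at a fixed index $m$ (sending all mass at indices $>m$ to the $0$-atom), shows via a union bound that this perturbs $\tind(F,\cdot)$ by at most $\epsilon/3$ uniformly in $\sigma$ (the key estimate being $\mu^2(\{(i,i):i>m\})\le 1/(m+1)$), and then observes that on measures supported in $\{0,1,\dots,m\}$ the function $\tind(F,\cdot)$ is a polynomial in finitely many point masses, hence continuous under pointwise convergence --- noting that after truncation the $0$-mass also converges, because it equals $1-\sum_{i=1}^m\rho_{n,i}$. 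You instead pass to canonical $[0,1]$-representations $\widetilde W_\sigma$, prove the stronger statement that $\widetilde W_{\rho_n}\to\widetilde W_\rho$ in $L^1([0,1]^2)$, and deduce convergence of every $\tind$ via the standard telescoping bound. Both arguments ultimately rest on the same uniform tail control $\sum_{i>K}\sigma_i^2\le\sigma_{K+1}\le 1/(K+1)$; your version buys a stronger mode of convergence (hence cut-distance convergence for free) at the price of introducing the $[0,1]$-representation, while the paper's version is more self-contained within the discrete setup it has already built. Your closing remark about mass escaping to the $0$-atom is exactly the phenomenon the paper handles by its truncation-to-$0$ trick.
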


\begin{proof} If $\sum_{i=1}^\infty \rho_i>1$, then $\sum_{i=1}^m \rho_i>1$ for some $m$ and thus $\sum_{i=1}^m \rho_{n,i}>1$ for sufficiently large $n$, a contradiction. Thus $\rho\in\C{R}$.

We have to show that the graphons $P_n=(\Omega,2^\Omega,\mu_n,W)$ converge to $P_{\rho}$, where $\mu_n=\mu_{\rho_{n}}$. Take any $F\in\C F$ and $\e>0$. Let $k=|F|$ and fix an integer $m>3{k\choose 2}/\e$. 

For any $Q=(\Omega,2^\Omega,\mu,W)\in\C P$, define $Q'=(\Omega,2^\Omega,\mu',W)\in\C P$, where $\mu'$ is the push-forward of the measure $\mu$ under the map that sends each $i>m$ to $0$ and is the identity otherwise. (In the $\C{R}$-domain, this corresponds to truncating $x\in\C{R}$ to $x'=(x_1,\dots,x_m,0,\dots)\in\C{R}$.) Let us show that  \begin{equation}\label{eq:Q'}
 |\tind(F,Q)-\tind(F,Q')|\le \e/3,\qquad\mbox{for every $Q\in\C P$.}
 \end{equation} 
 This inequality becomes more obvious if we allow general graphons and observe that the graphon $Q'$ is equivalent to $(\Omega,2^\Omega,\mu,W')$, where $W'(i,j)$ is defined to be $0$ if $1\le i=j\le m$ and $1$ otherwise. Thus when we pass from $W$ to $W'$ on the same probability space $(\Omega,2^\Omega,\mu)$, then for every $i\in \Omega$ the measure of $j$ with $W(i,j)\not=W'(i,j)$ is always at most $\frac{1}{m+1}\le\e/3{k\choose 2}$. By Tonelli's theorem, this also upper bounds the $\mu^2$-measure of the set $Z$ of pairs in $\Omega^2$ where $W$ and $W'$ differ. Now, $\tind(F,\cdot)$ is an integral of a $[0,1]$-valued function over $\Omega^k$ and, by the Union Bound, the probability that some pair hits $Z$ is at most ${k\choose 2}\mu^2(Z)\le \e/3$, giving the desired.

Note that $\mu_n'(\{i\})=\mu_n(\{i\})$ converges to $\mu_\rho'(\{i\})=\mu_\rho(\{i\})$ for each $i\in[m]$. It follows that $\mu_n'(\{0\})$ converges to $\mu_\rho'(\{0\})$,
%This also holds for $i=0$ 
since the support of probability measures $\mu_\rho'$ and any $\mu_n'$ is a subset of $\{0\}\cup[m]$. For such measures $\tind(F,\cdot)$ is a polynomial (and thus continuous) function of the measures of singletons $0,\dots,m$. Thus, for all large $n$, we have that $|\tind(F,P_n')-\tind(F,P_\rho')|\le \e/3$; then it holds by~\eqref{eq:Q'} that  $|\tind(F,P_n)-\tind(F,P_\rho)|\le \e$. Since $\e>0$ and $F$ were arbitrary, $P_n\to P_\rho$ as required.\end{proof} 

\begin{remark} Using some standards facts about graphons, one can prove the converse implication of Lemma~\ref{lm:pointwise} (namely that the graphon convergence $P_{\rho_{n}}\to P_\rho$ implies that $\rho_{n}\to\rho$), see~\cite{LiuPikhurkoSharifzadehStaden} where the space $\C P$ is studied in more detail.
\end{remark}	

Note that the limit of the Tur\'an graphs $T^k_n$ as $n\to\infty$ is $Q_{K_k}$ (or, equivalently, $P_\rho$ for $\rho=(\frac1k,\dots,\frac1k,0,\dots)\in\C{R}$).

\begin{lemma}\label{lm:Unique} For every $k\ge 3$, every sequence of graphs as in~\eqref{eq:Gm} converges to $Q_{K_k}$.
%	For every integer $k\ge 2$, if $Q$
%is a graphon that minimises $p(C_5,Q)$ subject to $p(K_2,Q)=1-1/k$, then $Q$ is weakly isomorphic to $T^k$.
\end{lemma}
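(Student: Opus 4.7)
The plan is to combine compactness of the space $\C P$ of complete-partite graphons with the equality case of the flag-algebra identity~\eqref{eq:FA} to pin down the limit of any sequence $(G_m)$ satisfying~\eqref{eq:Gm}.

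First, using Lemma~\ref{lm:NoCocherries} and the induced graph removal lemma, I would replace each $G_m$ by a complete $k_m$-partite graph $G_m'$ with $|E(G_m)\triangle E(G_m')|=o(|G_m|^2)$, so that $G_m$ and $G_m'$ share the same graphon limit. Ordering the parts non-increasingly in relative size gives $\rho_{G_m'}\in\C{R}$. By compactness of $[0,1]^{\I N}$ in the product topology, I would pass to a subsequence along which $\rho_{G_m'}\to\rho$ coordinate-wise; Lemma~\ref{lm:pointwise} gives $\rho\in\C{R}$ and $P_{\rho_{G_m'}}\to P_\rho$, and hence $G_m\to P_\rho$ as graphons. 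By continuity, $P_\rho$ has edge density $p$ and $C_5$-density $\lambda$, so it is extremal for Theorem~\ref{thm:main}.

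Next, I would apply~\eqref{eq:FA} to the homomorphism $\phi:\C A\to\I R$ associated to $P_\rho$, with the matrix $M$ and scalar $\alpha$ from Section~\ref{subsec:nice_p}. Since $c_F\ge 5!\lambda$ for every $F\in\C F_5$ and $d_{C_5}(P_\rho)=\lambda$ exactly, every inequality in the derivation of~\eqref{eq:FA} must be tight. In particular $\phi(\llbracket X^T M X\rrbracket_1)=0$, which in the graphon formulation is $\int_\Omega Y_r^T M Y_r\,d\mu_\rho(r)=0$ with $Y_r$ the vector of the six rooted flag densities at root~$r$ in $P_\rho$. Since the integrand is pointwise nonnegative and $M=\tfrac{3}{2k^4}B^T A B$ with $A$ positive definite, this forces $B Y_r=0$ for $\mu_\rho$-a.e.~$r$.

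The last step is a direct computation of $Y_r$ at each atom. Let $q_m:=\sum_{j\ge 1}\rho_j^m$ so that $q_2=1/k$ by the edge-density condition. At $r=0$ every neighbor relation to $r$ is an edge, giving $Y_0=(0,0,0,0,q_2,1-q_2)^T$; the first entry of $B Y_0$ equals $(k-2)q_2-1=-2/k\neq 0$, so $\rho_0=0$. At $r=j\ge 1$ with $\rho_j>0$, a short calculation yields
\[
Y_j = \left(\rho_j^2,\,0,\,0,\,2\rho_j(1-\rho_j),\,q_2-\rho_j^2,\,(1-\rho_j)^2-(q_2-\rho_j^2)\right)^T,
\]
and the first row of $B Y_j=0$ reduces to $(k-2)q_2+2\rho_j-1=0$; substituting $q_2=1/k$ yields $\rho_j=1/k$, with the remaining two rows then automatically satisfied. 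Hence every positive $\rho_j$ equals $1/k$, and $\sum_j\rho_j=1$ forces exactly $k$ such entries, giving $\rho=(1/k,\ldots,1/k,0,0,\ldots)$ and $P_\rho=Q_{K_k}$, as required.

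The main technical step is the second one: one must translate~\eqref{eq:FA} from a discrete identity about graph densities into an integral identity on a general graphon and verify that the flag densities $p(X_i, P_\rho^r)$ match the explicit coordinate formulas for $Y_r$ above (notably the factor of $2$ in the $X_4$-entry, which reflects the trivial rooted automorphism group of $X_4$). Once this is in place, the remainder is the elementary linear algebra displayed above, and the uniqueness of $\rho$ guarantees that the originally chosen subsequence is irrelevant, so the whole sequence $(G_m)$ converges to $Q_{K_k}$.
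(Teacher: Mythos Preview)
Your proposal is correct and follows the same overall strategy as the paper: reduce to complete-partite limits via Lemma~\ref{lm:NoCocherries}, the removal lemma, and Lemma~\ref{lm:pointwise}; then use the equality case of the flag-algebra identity to force $Y_j\in\ker M=\ker B$ at every atom; finally identify $\rho$.

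The one place your argument differs from the paper is the endgame. The paper works abstractly with the null-space basis $\V z_1,\V z_2,\V z_3$ of~\eqref{eq:Zs}: it observes that the averaged vector $\O{Y}$ lies in $N$ and has vanishing second and third coordinates (these are the co-cherry densities, which are zero in any complete-partite limit), hence by nonnegativity each $Y_j$ has zero second and third coordinates, so $Y_j$ is parallel to $\V z_1$ and the normalisation $\sum_i(Y_j)_i=1$ gives $Y_j=\V z_1/k^2$. You instead compute $Y_j$ explicitly in terms of $\rho_j$ and $q_2=\sum_i\rho_i^2$ and solve $BY_j=0$ directly; your row-by-row check is correct (including the $Y_0$ case, which the paper handles only implicitly). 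Your route is slightly more hands-on but just as short, and it makes the role of the edge-density constraint $q_2=1/k$ completely transparent. One small point worth making explicit: from~\eqref{eq:Gm} you only get $p(K_2,P_\rho)\ge p$ and $d_{C_5}(P_\rho)\le\lambda$ a priori; the equality $p(K_2,P_\rho)=p$ then follows because the same identity gives $5!\,d_{C_5}(P_\rho)\ge 5!\lambda+\alpha\bigl(p(K_2,P_\rho)-p\bigr)$ with $\alpha>0$. The paper glosses over this in the same way, so it is not a defect of your write-up, but it is the one step a careful reader might query.
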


\begin{proof} Let us first show that $(G_n)_{n=1}^\infty$ has a subsequence convergent to $Q_{K_k}$. By Lemma~\ref{lm:NoCocherries} and the induced removal lemma, we can make each $G_n$
complete partite, without changing the convergence of any subsequence. Recall that $\rho_{G_n}\in\C{R}$ is the vector encoding the part ratios of $G_n$. Since the product space $[0,1]^{\I N}$ is compact, some subsequence of $\rho_{G_n}\in[0,1]^{\I N}$ converges to some $\rho$. By Lemma~\ref{lm:pointwise}, we have that $\rho\in \C{R}$ and the corresponding subsequence of graphs $G_n$ converges to $Q=P_\rho$. Thus the graphon $Q$ satisfies that
$p(K_2,Q)=p$ and $d_{C_5}(Q)=\lambda$.

The identity in~\eqref{eq:main} can be re-written as an identity valid for every graphon. Since we need to analyse it only for $Q$, let us state a version that uses the (very special) structure of graphons in $\C P$. We need a few definitions first.

For a graph $F\in\C F^1$ on $[k]$ rooted at $1$ and $j\in\Omega$, define the \emph{rooted density} of $F$ in $(Q,j)$ as $\tind(F,(Q,j))=\sum_f \prod_{i=2}^k \rho_{f(i)}$, where $f$ in the sum ranges over all maps $V(F)\to \Omega$ such that $f(1)=j$ and, for all distinct $u,v\in V(F)$, we have that $W(f(u),f(v))=1$ if and only if $\{u,v\}\in E(F)$. Equivalently, this is the limit as $n\to\infty$ of the probability of the following event $\mathcal{E}$. Suppose we choose $k-1$ independent uniform vertices in $G_n$ together with another vertex we call the {\em root} which we make adjacent to everybody else if $j=0$ or $G_n$ has fewer than $j$ parts, and otherwise we put the root in the $j$-th largest part of $G_n$. Then we let $\mathcal{E}$ be the event that these chosen vertices together with the root induce a vertex-labeled homomorphic copy of $F$. For example, if $H\in\C F$ is the unrooted copy of $F$, then 
 \begin{equation}\label{eq:AvOfTind}
 \tind(H,Q)=\sum_{j=0}^\infty \tind(F,(Q,j))\, \rho_j.
 \end{equation}
The version for unlabeled non-roots is $p(F,(Q,j))=\frac{(k-1)!}{|\aut(F)|}\, \tind(F,(Q,j))$, where $\aut(F)$ is the group of root-preserving automorphisms of $F$. We also define a column vector  
$$
Y_j=\big(p(X_1,(Q,j)),\dots,p(X_6,(Q,j))\big)^T\in\I R^6,
$$
 where $X=(X_1,\dots,X_6)^T$ was defined in~\eqref{eq:X}. With this notation, the limit version of~\eqref{eq:main} %(where some terms are moved across the equality sign) 
 is
\begin{equation}\label{eq:identity}
5! d_{C_5}(Q)-\sum_{F\in\C F_5} c_F p(F,Q)+\alpha\left(p(K_2,Q)-p\right)=\sum_{j=0}^\infty   Y_j^T MY_j\, \rho_j.
\end{equation}

Recall that each $c_F$ in~\eqref{eq:identity} is at least $5!\lambda$ and that $p(K_2,Q)=p$ for our $Q$ (which is the limit of some $G_n$); thus the left-hand size of~\eqref{eq:identity} is non-positive. Also, recall that $M\succeq0$; thus $x^TMx\ge 0$ for every $x\in\I R^6$ with equality if and only if $Mx=0$. As the $3\times 3$-matrix $A$ in the factorization~\eqref{eq:BAB} is non-singular, the null-space $N$ of $M$ is the same as that of $B$. Calculations (see e.g.\ the Maple code in Appendix~\ref{appendix:B}) show that the 3-dimensional vector space $N$ can be spanned by $\V z_1,\V z_2,\V z_3\in\I R^6$ where
	\begin{equation}\label{eq:Zs}
	\left(\begin{array}{c}\V z_1^T\\ \V z_2^T\\ \V z_3^T
	\end{array}\right)
	= \left(
	\begin{array}{cccccc}
	1 & 0 & 0 & 2 (k-1) & k-1 & k^2-3 k+2 \\
	0 & 1 & 0 & -2 & 0 & 1 \\
	0 & 0 & 1 & k-1 & \frac{k-2}2 & \frac{k^2-3 k+2}2 
	\end{array}
	\right).
	\end{equation}
	 
	 By the previous paragraph, $Y_j\in\I R^6$ belongs to $N$ for every $j\in\Omega$ with $\rho_j>0$. 
	 Since $N$ is a (finite dimensional and thus closed) linear subspace, it also contains the mean $\O{Y}=\sum_{j=0}^\infty Y_j\, \rho_j$. By~\eqref{eq:AvOfTind}, we have that, in particular, $\O{Y}_2=\tind(\O{P_2},Q)$
	and $\O{Y}_3=\frac12\,\tind(\O{P_2},Q)$ are both 0. Since the entries in each $Y_j$ are non-negative,
	we conclude that 
 $Y_j$ has its second and third coordinates zero for every $j\in\Omega$ with $\rho_j>0$.
	The row-reduced matrix in~\eqref{eq:Zs} shows that such $Y_j$ must be colinear to $\V z_1$.
	Since the sum of entries of $Y_j$ is 1, we have $Y_j=\frac1{k^2}\V z_1$. In particular, its first coordinate is $1/k^2$. On the other hand, it is $p(X_1,(Q,j))$ which is the density of $\O{K_3}$ rooted at $j$ in $Q$, that is, it is $\rho_j^2$ if $j\ge 1$ and 0 if $j=0$. Thus $\rho_0=0$ and $\rho_j=1/k$ for every $j$ in the support of $\mu$, so indeed $Q=Q_{K_k}$ is the limit of Tur\'an graphs.

Finally, if we assume  on the contrary to the lemma that  the whole sequence $(G_n)_{n=1}^\infty$ does not converges to $Q_{K_k}$, then by the compactness of the space of all graphons, some subsequence converges to a graphon non-equivalent to $Q_{K_k}$. But then this violates the first claim of the proof.\end{proof}

\begin{proof}[Second Proof of Theorem~\ref{th:stab}.] Lemma~\ref{lm:Unique} and the fact that each graphon is the limit of some sequence of finite graphs imply that the limit version of the $C_5$-minimisation problem has the unique solution $Q_{K_k}$ whose function $W$, moreover, happens to be $\{0,1\}$-valued. These are exactly the assumptions of~\cite[Theorem~15]{Pikhurko10dm} which directly gives the required stability property.
	% (The same conclusion can also be derived from the results in~\cite{LovaszSzegedy10ijm}.)
	
	In order to give the reader some idea of what is going on, let us unfold slightly the proof of~\cite[Theorem~15]{Pikhurko10dm} for this particular case. Suppose on the contrary that, for some integer $k\ge 3$ and $\delta>0$, a sequence $G_n$ of graphs as in~\eqref{eq:Gm} violates the stability property. By passing to a subsequence, it converges to some graphon $Q$ with $p(K_2,Q)=p$ and $d_{C_5}(Q)=\lambda$.
	By Lemma~\ref{lm:Unique}, we can assume that $Q=Q_{K_k}$. While the convergence of $G_n$ to some graphon does not identify $G_n$ within edit distance $o(|G_n|^2)$ in general,
	% (consider, for example, the sequence of $\frac12$-quasirandom graphs that converges to the constant-$\frac12$ graphon), 
	it does if the function $W$ of the graphon assumes only values $0$ and $1$, see~\cite[Lemma~2.9]{LovaszSzegedy10ijm} or~\cite[Theorem~17]{Pikhurko10dm}. %(or~\cite[Proposition~8.24]{Lovasz:lngl}). 
	Thus, the convergence $G_n\to Q_{K_k}$ implies that $G_n$ is $o(|G_n|^2)$-close to $T_k^{|G_n|}$ in the edit distance, contradicting our assumption.\end{proof}

Another possible derivation of Theorem~\ref{th:stab} from Lemma~\ref{lm:Unique}
is to use the known properties of the so-called \emph{cut-distance} via the argument in~\cite[Page 146]{PikRaz2017} where the description of all extremal graphons for the triangle-minimisation problem was used to describe all almost extremal graphs.

\section{Remarks on the case $p \neq 1-\frac{1}{k}$}\label{sec:general_case}

Our general upper bound construction is as follows. Suppose that $p$ is a constant satisfying $1-\frac1k < p < 1-\frac1{k+1}$. Partition the vertices into $k-1$ sets $X_1, \ldots, X_{k-1}$ of size $xn$ and one more  set $Y$ of size $yn$. Each $X_i$ is an independent set. For $1 \le i \neq j \le k-1$ we have that $X_i$ is complete to $X_j$. $Y$ is also complete to each $X_i$. Finally, $G[Y]$ is any graph such that for some parameter $0 < \rho < \frac{1}{2}$ we have
\begin{enumerate}[(i)]
\item \label{cond1}$G[Y]$ has asymptotically $\frac12 y^2n^2 \rho$ edges, $\frac12 y^3 n^3 \rho^2$ paths of length 2 (that means on 3 vertices), and $\frac12 y^4n^4 \rho^3$ paths of length 3;
\item \label{cond2}$G[Y]$ has $o(n^5)$ copies of $C_5$.
\end{enumerate}
(See the end of this subsection for discussion on which graphs are suitable for $G[Y]$.)
We assume that 
\[
(k-1)x+y = 1
\]
so we have a total of $n$ vertices. The edge density in this construction is
\[
\frac{\binom{k-1}2 \rbrac{xn}^2 + (k-1)(xn)(yn) + (\frac12+o(1)) y^2n^2 \rho}{\binom{n}{2}}, 
\]
which tends to 
\[
g(x, y, \rho) = (k-1)_2 x^2 + 2(k-1)xy + \rho y^2
\]
as $n \rightarrow \infty$. So we also assume that the parameters $x, y, \rho$ satisfy $g(x, y, \rho) = p$. 

Now we consider the ratio $f(x, y, \rho) = \lim_{n\to\infty} \frac{\nu_G(C_5)}{n^5}$.
We claim that
\begin{align*}
f(x, y, \rho)&=  \sbrac{\frac1{10} (k-1)_5 + \frac12 (k-1)_4 + \frac12 (k-1)_3} x^5\\ 
&\,+ \sbrac{ \frac12(k-1)_4 + \frac32 (k-1)_3 +  \frac12(k-1)_2} x^4 y\\
&\,+ \sbrac{ \rbrac{\frac12 + \frac12 \rho}(k-1)_3 + \rbrac{1 + \frac12 \rho}(k-1)_2    } x^3 y^2\\
&\,+ \sbrac{\rbrac{\frac12 \rho+ \frac12 \rho^2}(k-1)_2  + \frac12 \rho(k-1) } x^2 y^3 \\
&\,+ \frac12 \rho^3 (k-1) x y^4.
\end{align*}
Note that we have grouped the terms of $f(x, y, \rho)$ according to powers of $x$ and $y$, and then according to falling factorials of $(k-1)$. To understand our formula, it helps to think of the powers of $x,y$ as specifying how many vertices come from sets of size $xn, yn$, and the falling factorial $(k-1)$ as specifying how many distinct sets of size $xn$ are involved. For example, the first term $\frac1{10} (k-1)_5 \; x^5$ is there because there are $\frac1{10} (k-1)_5 (xn)^5$ many copies of $C_5$ having vertices $v_1, \ldots, v_5$ all in different parts of size $xn$. Now let us justify a more complicated term like say the second term in the third line, $ \rbrac{1+\frac12 \rho}(k-1)_2\; x^3y^2 $. This term counts the copies of $C_5$ that have vertices $v_1, \ldots, v_5$ such that $v_1$ and $v_2$ come from $Y$, $v_3$ and $v_4$ are in the same set of size $xn$, and $v_5$ is in some other set of size $xn$ (and $v_1, \ldots, v_5$ may be in any order on the cycle). The case where $v_1$ and $v_2$ are consecutive in the cycle contributes $\frac12 (k-1)_2 \rho (yn)^2(xn)^3$, and the other case contributes $(k-1)_2 (yn)^2(xn)^3$.

Now for a given integer $k\ge 2$ and a real number $1-\frac1k < p < 1-\frac1{k+1}$ we define an optimization problem (P):
\begin{center}
\begin{tabular}{l l l}
& \texttt{Minimize}    & $f(x, y, \rho)$ \\[8pt]
     & \texttt{subject to:} & $(k-1)x+y = 1$,\\[5pt]
     & & $g(x, y, \rho) = p$,\\[5pt]
     & & $x,y\ge 0$.\\[5pt]
\end{tabular}
\end{center}
Let us denote its solution by $f_{\min}(p) = f(x_0, y_0, \rho_0)$. Clearly, $d_{C_5}(p) \le f_{\min}(p)$. For some certain values of $k$ and $p$ 
we verified that $120\cdot f_{\min}(p)$ numerically matches 
%the bound given by the flag algebras. Unfortunately, we were unable to show it formally. 
%
the lower bound on $d_{C_5}(p)$ given by the flag algebras.
In particular, when we calculated with unlabeled flags of order $\ell$, we were getting numerically matching 
bounds for $p \leq 1 - \frac{1}{\ell-2}$ and we observed a gap in the bounds for $p > 1 - \frac{1}{\ell-2}$ different from Tur\'an densities.
Since computer calculations can be performed with current computers in a reasonable time only for $\ell \leq 8$, a simple straightforward use of computer is unlikely to provide a numerical match of $d_{C_5}(p)$ and $f_{\min}(p)$ for all $p$.
Unfortunately, we were unable to convert the numerical match to a formal proof.
The main problem is that (P) has no closed solution. For example, for $k=2$ and $\frac{1}{2}< p < \frac{2}{3}$ we can plug into the objective function $y=1-x$ and $\rho = (p - x^2 - 2xy)/y^2$ obtaining
\[
f(2, x, 1-x, (p - x^2 - 2xy)/y^2) = {\frac { x ( 2x^{2}-2x+p)  ( 3x^{4}-5x^{3} +(1+4p)x^{2}+ (1-4p)x+p^2) }{2 \left(x-1
 \right) ^{2}}}.
\]
Now it is not difficult to show that there exists a local minimum for some $\frac{1}{3} < x < \frac{1}{2}$. Unfortunately, it looks like this minimum can be only found numerically.
There might be a different parametrization of the problem that would make it possible to solve $(P)$ and formally show  a match with flag algebra calculations for some range of $p$.
On Figure~\ref{fig:opt} we present the shape of $f_{\min}(p)$. We conjecture that $d_{C_5}(p) = f_{\min}(p)$ for any~$p$.

\begin{figure}
\centering
\begin{subfigure}[b]{0.3\textwidth}
                \centering
                \includegraphics[width=\textwidth]{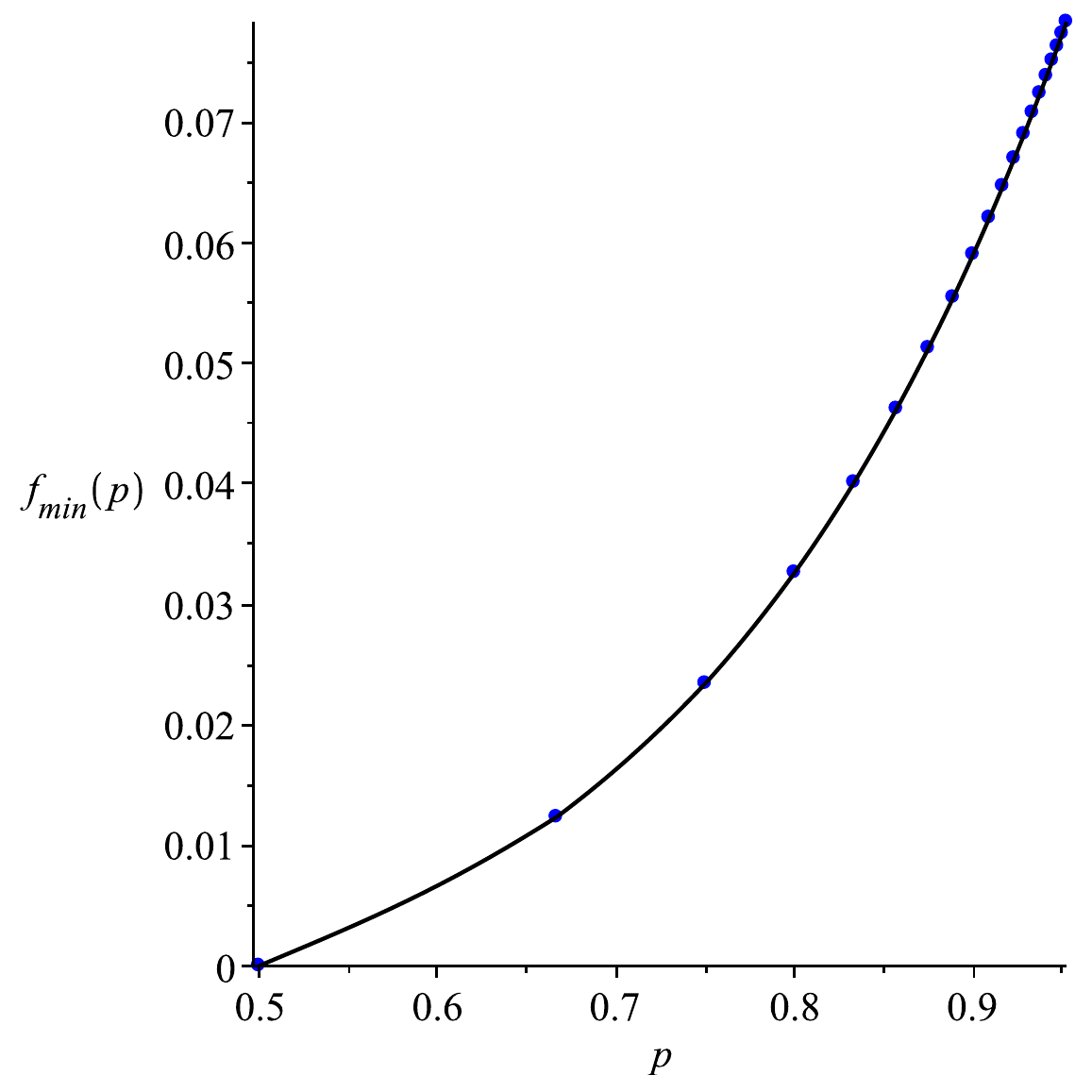}
                \caption{}
                \label{subfig:1}
\end{subfigure}
\quad
\begin{subfigure}[b]{0.3\textwidth}
                \centering
                \includegraphics[width=\textwidth]{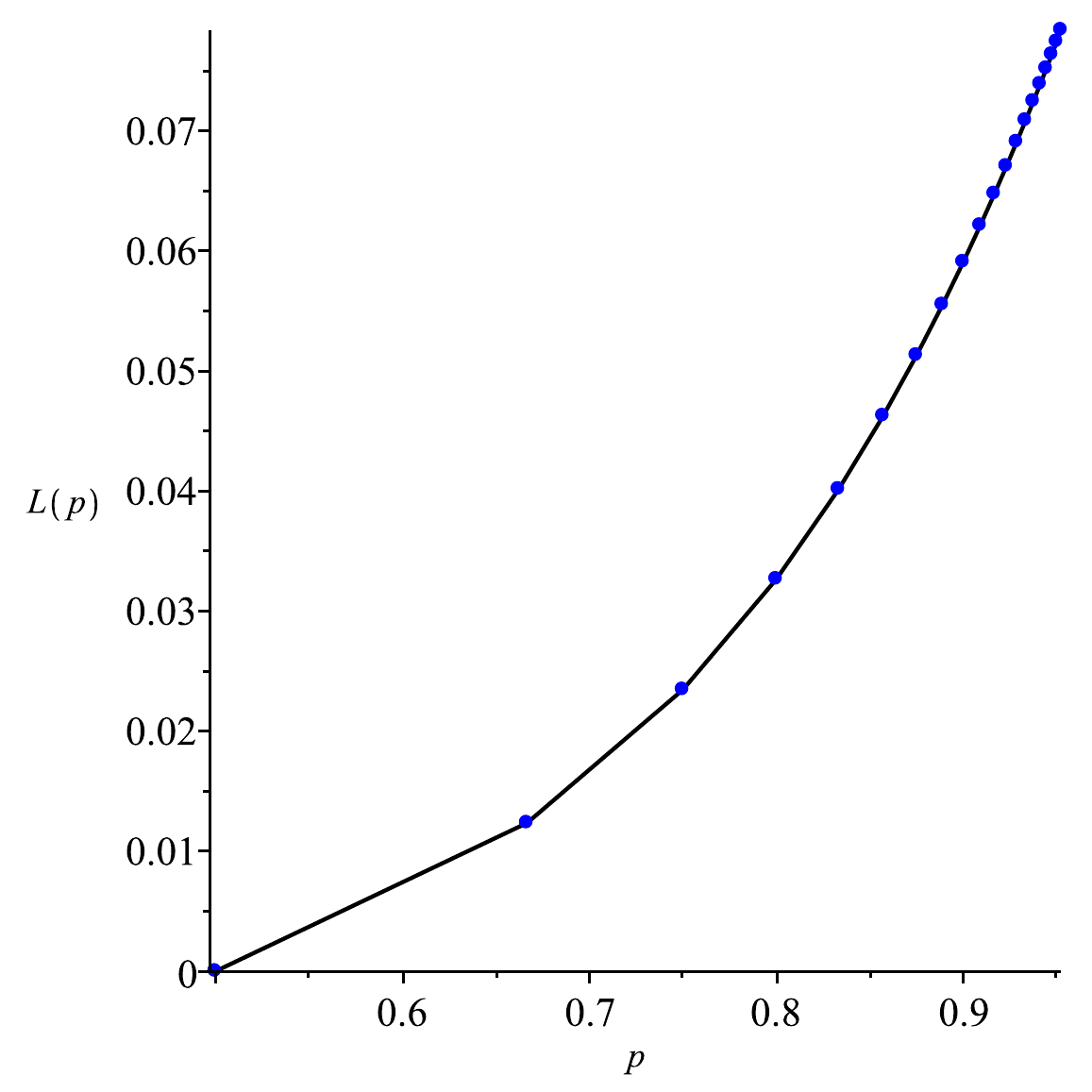}
                \caption{}
                \label{subfig:2}
\end{subfigure}%
\quad
\begin{subfigure}[b]{0.3\textwidth}
                \centering
                \includegraphics[width=\textwidth]{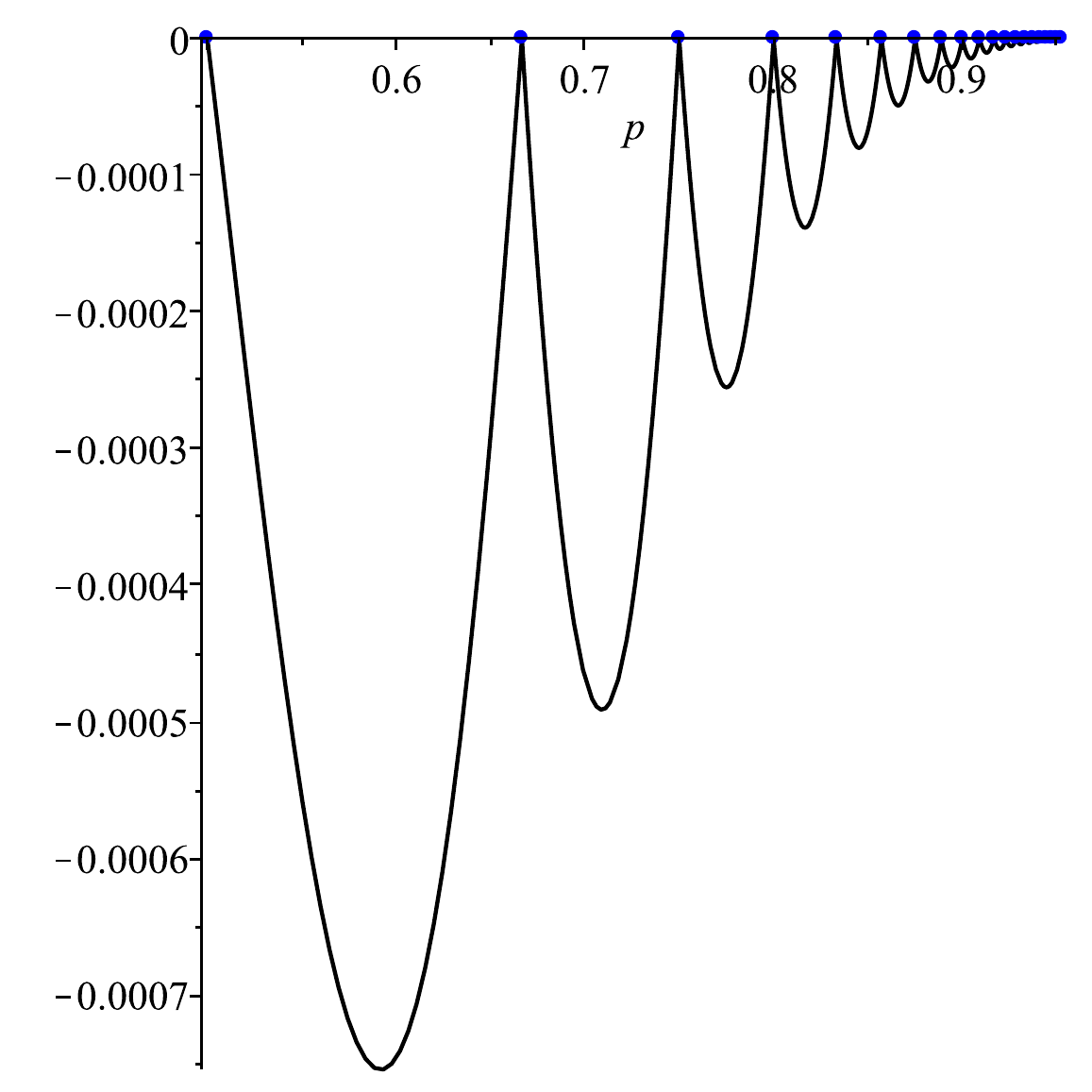}
                \caption{}
                \label{subfig:3}
\end{subfigure}%
\caption{(\subref{subfig:1}) A graph of $f_{\min}(p)$ based on numerical calculations. Blue points correspond to the Tur\'an densities (i.e. $p=1-1/k$). (\subref{subfig:2}) Secant lines between Tur\'an densities. (\subref{subfig:3}) A graph of $f_{\min}(p) - L(p)$.}
\label{fig:opt}
\end{figure}

We now address what graphs are suitable for $G[Y]$, i.e. what graphs satisfy \eqref{cond1} and \eqref{cond2}. Note first that some such choice of $G[Y]$ exists, for example it can be a random bipartite graph with two parts of size $\frac12 yn$ and edge probability $2 \rho$. Now we claim that $G[Y]$ satisfies \eqref{cond1} if and only if $G[Y]$ is {\it almost $yn\rho$-regular,} or more formally, all but $o(n)$ vertices in $G[Y]$ have degree $(1+o(1))yn\rho$. Indeed, if $G[Y]$ is almost $yn\rho$-regular then it is easy to verify the edge and path counts in \eqref{cond1}. Conversely, suppose \eqref{cond1} holds, and let the random variable $Z$ represent the degree of a random vertex in $G[Y]$. Then we have $\E[Z] = (1+o(1))yn\rho$ and since $\sum_{v \in Y} \binom{deg(v)}{2}$ is the number of paths of length 2 we can calculate
\begin{align*}
\E[Z^2] = & \frac1{yn} \sum_{v \in V(Y)} deg(v)^2= \frac1{yn}\cdot 2(1+o(1))\frac12 y^3n^3 \rho^2= (1+o(1))y^2 n^2 \rho^2 = (1+o(1))\E[Z]^2
\end{align*}
so $Z$ is concentrated by Chebyshev's inequality (see, e.g., Lemma~20.3 in~\cite{FriKar2016}). In other words, $G[Y]$ is almost $yn \rho$-regular. 

We believe that we have described all almost optimal graphs. Specifically, we believe that any graph with edge density $p$ and $C_5$-density $d_{C_5}(p)+o(1)$ can be transformed by adding or deleting at most $o(n^2)$ edges into a graph with a vertex partition $X_1, \ldots, X_{k-1}, Y$ where $|X_i|=xn, |Y|=yn$, all $X_i$ are independent, all $X_i$ and $Y$ are complete to each other, and $G[Y]$ is $yn\rho$-regular where $x, y, \rho$ are a solution to the optimization problem (P).
\providecommand{\bysame}{\leavevmode\hbox to3em{\hrulefill}\thinspace}
\providecommand{\MR}{\relax\ifhmode\unskip\space\fi MR }
% \MRhref is called by the amsart/book/proc definition of \MR.
\providecommand{\MRhref}[2]{%
  \href{http://www.ams.org/mathscinet-getitem?mr=#1}{#2}

}
%\bibliographystyle{amsplain}
%\bibliography{refs}

\providecommand{\bysame}{\leavevmode\hbox to3em{\hrulefill}\thinspace}
\providecommand{\MR}{\relax\ifhmode\unskip\space\fi MR }
% \MRhref is called by the amsart/book/proc definition of \MR.
\providecommand{\MRhref}[2]{%
  \href{http://www.ams.org/mathscinet-getitem?mr=#1}{#2}
}
\providecommand{\href}[2]{#2}

\appendix

\newpage 

\section{Appendix}\label{appendix:A}

%\tikzset{unlabeled_vertex/.style={inner sep=1.7pt, outer sep=0pt, circle, fill}} 
%\tikzset{labeled_vertex/.style={inner sep=2.2pt, outer sep=0pt, rectangle, fill=yellow, draw=black}} 
%\tikzset{edge_color0/.style={color=black,line width=1.2pt}} 
%\tikzset{edge_color1/.style={color=red,  line width=1.2pt,opacity=0}} 
%\tikzset{edge_color2/.style={color=blue, line width=0.8pt,opacity=1}} 
%\tikzset{edge_color3/.style={color=green,line width=1.2pt}} 
%\tikzset{edge_color4/.style={color=red,  line width=1.2pt,dotted}} 
%\tikzset{edge_color5/.style={color=blue, line width=1.2pt,dotted}} 
%\tikzset{edge_color6/.style={color=green, line width=1.2pt,dotted}} 
%\tikzset{edge_thin/.style={color=black}} 
%\tikzset{edge_hidden/.style={color=black,dotted,opacity=0}} 
%\tikzset{vertex_color1/.style={inner sep=1.7pt, outer sep=0pt, draw, circle, fill=red}} 
%\tikzset{vertex_color2/.style={inner sep=1.7pt, outer sep=0pt, draw, circle, fill=blue}} 
%\tikzset{vertex_color3/.style={inner sep=1.7pt, outer sep=0pt, draw, circle, fill=green}} 
%\def\outercycle#1#2{ \draw \foreach \x in {0,1,...,#1}{(270-45+\x*360/#2:0.3) coordinate(x\x)};}

\ \vspace{-0.5cm}

\renewcommand{\arraystretch}{1.3}
{\setlength{\tabcolsep}{1pt}

\begin{table}[H]
    \centering
    \begin{adjustbox}{angle=90}
 \scalebox{0.4}{
% [inline block 1: 1 envs, 47184 chars -> data_tex | \begin{tabular}{c||c|c|c|c|c|c|c|c|c|c|c|c|c|c|c|c|c|c|c|c|c|c|c|c|c|c|c|c|c|c|c|c|c|c} $F$ &...]

}
\end{adjustbox}
\caption{All entries corresponding to $p(K_2,F)$ are multiplied by 10 and all entries corresponding to $\llbracket X_i \times X_j \rrbracket_1$ are multiplied by 30.}
\label{table:pF}
\end{table}

\newpage

\section{Appendix}\label{appendix:B}

This Maple code computes $c_F$ coefficients. Matrices $A$, $B$ and $M$ are defined in Subsection~\ref{subsec:nice_p}. $X$ is a matrix of size $21 \times 34$ and it is defined in Appendix~\ref{appendix:A} (rows correspond to $\llbracket X_i \times X_j \rrbracket_1$). Vectors \verb+cFOPT, pF, cFM+ and \verb+cF+ (each of size 34) correspond to $\ c_F^{OPT}, p(K_2,F), c_F^M$ and $c_F$, respectively. Constant $a$ corresponds to $\alpha$.

\bigskip

\begin{verbnobox}[\footnotesize]
restart:
with(LinearAlgebra):
A := Matrix([[32*k^2-96*k+96, 0, 4*k^2-16*k], 
[0, 10*k^4-30*k^3-8*k^2+96*k-96, -10*k^4+35*k^3-4*k^2-80*k+96], 
[4*k^2-16*k, -10*k^4+35*k^3-4*k^2-80*k+96, 10*k^4-40*k^3+24*k^2+64*k-96]]):
B := Matrix([[k-1, 1, k-2, 0, k-3, -1], 
[0, 2, k-2, 0, 2*k-4, -2], 
[0, 0, k-1, -1, 2*k-2, -2]]):
M:= (3/(2*k^4))*Matrix(Multiply(Transpose(B), Multiply(A, B))):
X:=(1/30)*Matrix([[30,12,4,0,0,0,4,2,0,0,0,0,2,0,0,0,0,0,0,0,0,0,0,0,0,0,0,0,0,0,0,0,0,0],
[0,3,4,3,0,6,0,1,2,0,0,0,0,1,0,0,0,0,0,0,0,0,0,0,0,0,0,0,0,0,0,0,0,0],
[0,6,4,3,0,0,8,2,0,6,2,0,0,0,2,0,1,0,0,0,0,0,0,0,0,0,0,0,0,0,0,0,0,0],
[0,0,2,6,12,0,0,2,2,0,3,4,0,0,0,2,0,1,0,0,0,0,0,0,0,0,0,0,0,0,0,0,0,0],
[0,0,1,0,0,0,0,2,0,0,1,0,4,0,1,0,2,0,3,0,0,0,0,0,0,0,0,0,0,0,0,0,0,0],
[0,0,0,0,0,3,0,0,2,0,0,2,0,2,0,1,0,2,0,3,0,0,0,0,0,0,0,0,0,0,0,0,0,0],
[0,0,0,0,0,0,2,2,2,0,0,0,4,4,0,0,0,0,0,0,6,0,0,0,0,0,0,0,0,0,0,0,0,0],
[0,0,0,0,0,0,4,2,1,4,2,0,0,0,2,2,0,0,0,0,0,6,2,1,0,0,0,0,0,0,0,0,0,0],
[0,0,0,0,0,0,0,0,0,2,2,2,0,0,2,2,2,2,0,0,0,0,1,2,3,0,0,0,0,0,0,0,0,0],
[0,0,0,0,0,0,0,0,0,1,0,0,0,0,2,0,1,0,0,0,0,0,1,0,0,0,5,2,0,1,0,0,0,0],
[0,0,0,0,0,0,0,0,0,0,0,0,0,0,0,0,0,0,0,0,0,3,2,1,0,4,0,1,2,0,1,0,0,0],
[0,0,4,0,0,12,0,4,4,4,0,0,0,0,6,2,0,0,0,0,0,12,4,0,0,0,10,2,0,0,0,0,0,0],
[0,0,0,0,0,0,0,2,2,0,2,4,8,4,2,0,4,2,0,0,0,0,4,2,0,8,0,2,2,0,0,0,0,0],
[0,0,0,3,0,0,0,0,2,0,2,0,0,2,0,2,1,0,0,0,0,0,2,2,0,0,0,2,0,2,0,0,0,0],
[0,0,0,0,0,0,0,0,1,0,0,0,0,4,0,2,0,2,0,0,12,0,0,3,6,0,0,0,2,0,2,0,0,0],
[0,0,0,0,0,0,0,0,0,0,0,0,0,0,2,2,4,4,12,12,0,0,0,0,0,0,10,6,4,4,4,0,0,0],
[0,0,0,0,0,0,0,0,0,0,1,0,0,0,0,2,2,1,6,0,0,0,0,2,0,0,0,2,2,4,0,4,0,0],
[0,0,0,0,0,0,0,0,0,0,0,0,0,0,0,0,0,0,0,0,0,0,1,2,3,0,0,2,2,6,4,8,6,0],
[0,0,0,0,6,0,0,0,0,0,0,4,0,0,0,0,0,2,0,0,0,0,0,0,0,4,0,0,2,0,0,2,0,0],
[0,0,0,0,0,0,0,0,0,0,0,1,0,0,0,0,0,2,0,6,0,0,0,0,3,0,0,0,1,0,4,0,3,0],
[0,0,0,0,0,0,0,0,0,0,0,0,0,0,0,0,0,0,0,0,0,0,0,0,0,2,0,0,2,0,4,4,12,30]]):
cFM := Vector(34):
k_ind := 0:
printlevel := 2:
for i to 6 do 
    for j from i to 6 do 
        k_ind := k_ind+1; 
        if i = j then cFM := cFM+M(i, j)*Transpose(Row(X, k_ind)); 
                 else cFM := cFM+2*M(i, j)*Transpose(Row(X,k_ind)); 
        end if; 
    end do; 
end do:
cFOPT := Vector([0,0,0,0,0,0,0,0,0,0,0,0,0,0,0,0,0,0,0,0,0,0,0,0,0,0,1,1,1,2,2,4,6,12]):
pF := (1/10)*Vector([0,1,2,3,4,3,2,3,4,3,4,5,4,5,4,5,5,6,6,7,6,4,5,6,7,6,5,6,7,7,8,8,9,10]):
a := (1/(k^3))*(60*k^3 - 240*k^2 + 360*k - 192):
cF := Vector(34):
for i to 34 do 
    cF(i) := cFOPT(i)-a*pF(i)-cFM(i)+(k-1)*a/k
end do:
for i to 34 do 
    printf("5*k^4*cF(%d) = %s\n", i, convert(expand(5*k^4*cF(i)), string))
end do:
kernel := NullSpace(B):
kernelMatrix := Matrix(convert(kernel, list)):
ReducedRowEchelonForm(Transpose(kernelMatrix))
\end{verbnobox}

\end{document}